\newtheorem{thm}{Theorem}[section]
\newtheorem{lem}[thm]{Lemma}
\newtheorem{cor}[thm]{Corollary}
\theoremstyle{definition}
\theoremstyle{remark}
\newtheorem{rmk}[thm]{Remark}
\newtheorem{nota}[thm]{Notation}
\numberwithin{equation}{section}
\g@addto@macro\bfseries{\boldmath}
\newcommand{\Aut}{\mathrm{Aut}}   
\newcommand{\End}{\mathrm{End}}   
\newcommand{\eps}{\varepsilon}
\newcommand{\Fix}{\mathrm{Fix}}   
\newcommand{\fix}{\mathrm{fix}}   
\newcommand{\coin}{\mathrm{coin}} 
\newcommand{\g}{\mathfrak{g}}     
\renewcommand{\iff}{\enskip\Leftrightarrow\enskip}
\renewcommand{\mod}{\;\mathrm{mod}\,}
\newcommand{\orb}{\backslash}     
\newcommand{\R}{\mathcal{R}}      
\newcommand{\RR}{\mathbb{R}}      
\renewcommand{\S}{\Sigma}         
\newcommand{\ZZ}{\mathbb{Z}}      
\subjclass[2010]{55M20}
\begin{document}

\title[Nielsen numbers of affine $n$-valued maps]{An averaging formula for Nielsen numbers of affine $n$\hspace{1pt}-\hspace{-0.5pt}valued maps on infra-nilmanifolds}
\author{Karel Dekimpe and Lore De Weerdt}
\thanks{Research supported by Methusalem grant METH/21/03 -- long term structural funding of the Flemish Government.}
\address{KU Leuven Campus Kulak Kortrijk, 8500 Kortrijk, Belgium}
\email{Karel.Dekimpe@kuleuven.be}
\email{Lore.DeWeerdt@kuleuven.be}
\begin{abstract}
In \cite{kimleelee,leelee}, the authors developed a nice formula to compute the Nielsen number of a self-map on an infra-nilmanifold. For the case of nilmanifolds this formula was extended to $n$-valued maps in \cite{charlotte2}. In this paper, we extend these results further and establish the averaging formula to compute the Nielsen number of any $n$-valued affine map on an infra-nilmanifold.
\end{abstract}

\maketitle

\section*{Introduction}

Let $M$ be a closed manifold (or more generally a compact polyhedron) and let $f:M\to M$ be a self-map of $M$. In Nielsen-Reidemeister fixed point theory one attaches to each such map $f$ a number $N(f)$, the Nielsen number of $f$, which is a (sharp) lower bound for the number of fixed points of any map $g$ which is homotopic to $f$. In this paper we are concerned with infra-nilmanifolds. 
It is known that any self-map on an infra-nilmanifold is homotopic to a so-called affine map (\cite{kblee}). These maps are rather algebraic in nature and one has developed an averaging formula for computing their Nielsen number (\cite{kimleelee,leelee}). Since the Nielsen number of a map is invariant under a homotopy, this formula allows us to compute the Nielsen number of any self-map on an infra-nilmanifold.
For $n$-valued maps  we also have the concept of affine maps (see \cite{lore} and/or section \ref{sec:affine}), but there the situation is more complicated. It is no longer true that any $n$-valued map on an infra-nilmanifold is homotopic to an affine $n$-valued map (\cite{lore}), but nevertheless, these affine $n$-valued maps still form a large and interesting class of $n$-valued maps on infra-nilmanifolds. In this paper, we obtain a natural generalisation of the formula from \cite{leelee} for computing the Nielsen number of any affine $n$-valued map on an infra-nilmanifold.

In the first section we recall the basic Nielsen theory for $n$-valued maps and describe a general averaging formula \eqref{eq:N(f)} for computing the Nielsen number of an $n$-valued map. In the second section, we recall the necessary concepts concerning infra-nilmanifolds and affine $n$-valued maps. In the third section we establish a decomposition of the fixed point set of an $n$-valued map on an infra-nilmanifold, which allows us to rewrite the general averaging formula into a more algebraic one \eqref{eq:decomp-N}. In the fourth section we further examine this formula in the case of affine $n$-valued maps, to finally arrive at the main result of this paper (Theorem~\ref{mainthm}), which is an averaging formula generalising in a natural way the formula from \cite{leelee}. 
Finally in the last section we illustrate our main result by means of an affine 2-valued map on the Klein bottle. Moreover, the example we provide is special in the sense that this map  does not lift to a 2-valued map on any torus (nilmanifold) that finitely covers the Klein bottle.

\section{Nielsen theory of $n$-valued maps}\label{sec:intro}

Let $X$ be a connected compact manifold with fundamental group $\pi$ and universal cover $p:\tilde{X}\to X$. We will view $\pi$ as being the group of covering transformations of $p$. 

An \emph{n-valued map} $f:X\multimap X$ is a continuous (i.e. upper and lower semi-continuous) set-valued function, sending each point of $X$ to a set of $n$ distinct points in $X$. Equivalently, by the work of
Brown and Gon\c{c}alves \cite{browngoncalves}, the map $f$ can be viewed as a continuous single-valued function from $X$ to the \emph{unordered configuration space} \[
D_n(X)=\{\{x_1,\ldots,x_n\}\subseteq X\mid x_i\neq x_j \text{ if } i\neq j \},
\]
i.e. the quotient of the space $F_n(X)=\{(x_1,\ldots,x_n)\in X^n\mid x_i\neq x_j \text{ if } i\neq j \}$ under the action of the permutation group $\S_n$. The fixed point set of an $n$-valued map $f$ is $\Fix(f)=\{x\in X \mid x\in f(x) \}$.

In what follows we recall some of the results of Brown et al.\ \cite{charlotte}, where the study of fixed points of $f$ was done by considering fixed points of \emph{lifts} of $f$ to the \emph{orbit configuration space} \[
F_n(\tilde{X},\pi)=\{(\tilde{x}_1,\ldots,\tilde{x}_n)\in \tilde{X}^n \mid p(\tilde{x}_i)\neq p(\tilde{x}_j) \text{ if } i\neq j \},
\]
which is a covering space of $D_n(X)$ with covering map \[
p^n:F_n(\tilde{X},\pi)\to D_n(X):(\tilde{x}_1,\ldots,\tilde{x}_n)\mapsto \{p(\tilde{x}_1),\ldots,p(\tilde{x}_n)\}.
\]
The corresponding covering group is $\pi^n\rtimes \S_n$, which acts on $F_n(\tilde{X},\pi)$ by \[
(\gamma_1,\ldots,\gamma_n;\sigma)(\tilde{x}_1,\ldots,\tilde{x}_n)=(\gamma_1\tilde{x}_{\sigma^{-1}(1)},\ldots,\gamma_n\tilde{x}_{\sigma^{-1}(n)}).
\]
A lift of $f$ can then be defined as a map $\tilde{f}:\tilde{X}\to F_n(\tilde{X},\pi)$ such that \[
\begin{tikzcd}
\tilde{X} \ar[r,"\tilde{f}"] \ar[d,"p"'] & F_n(\tilde{X},\pi) \ar[d,"p^n"] \\
X \ar[r,"f"] & D_n(X)
\end{tikzcd}
\]
commutes. As $F_n(\tilde{X},\pi)$ is a subspace of $\tilde{X}^n$, such a lift $\tilde{f}:\tilde{X}\to F_n(\tilde{X},\pi)$ splits into $n$ maps $\tilde{f}_1,\ldots,\tilde{f}_n:\tilde{X}\to\tilde{X}$, called the \emph{lift-factors} of $\tilde{f}$. 

A lift $\tilde{f}=(\tilde{f}_1,\ldots,\tilde{f}_n)$ induces a morphism
$(\phi_1,\ldots,\phi_n;\sigma):\pi\to \pi^n\rtimes \S_n$ of the covering groups so that for all $\gamma\in \pi$, \[
(\tilde{f}_1\gamma,\ldots,\tilde{f}_n\gamma)=(\phi_1(\gamma)\tilde{f}_{\sigma_\gamma^{-1}(1)},\ldots,\phi_n(\gamma)\tilde{f}_{\sigma_\gamma^{-1}(n)}).
\]
Using this morphism, one can first subdivide $\{1,\ldots,n\}$ into \emph{$\sigma$-classes}, the equivalence classes for the relation given by \[
i\sim j \iff \exists \gamma\in\pi:\sigma_{\gamma}(i)=j.
\]
For all $i$, denote by $S_i=\{\gamma\in\pi\mid \sigma_{\gamma}(i)=i \}$ the stabilizer of $i$ for this relation, which is a finite index subgroup of $\pi$. The restrictions of the maps $\phi_i$ to $S_i$ are group morphisms, using which one can define an equivalence relation on $\pi$ by \[
\alpha\sim\beta \iff \exists \gamma\in S_i:\alpha=\gamma\beta\phi_i(\gamma)^{-1}.
\]
The set of equivalence classes for this relation is denoted $\R[\phi_i]$, and its number of elements is the \emph{Reidemeister number} $R(\phi_i)$.

\begin{rmk}
In general, given two group morphisms $\varphi,\psi:\Pi_1\to\Pi_2$, one can consider the equivalence relation on $\Pi_2$ given by \[
\alpha\sim \beta \iff \exists \gamma\in \Pi_1:\alpha=\psi(\gamma)\beta\varphi(\gamma)^{-1}.
\]
The equivalence classes are called the \emph{Reidemeister coincidence classes} of $\varphi$ and $\psi$, and denoted $\R[\varphi,\psi]$. In case $\Pi_1$ is a subgroup of $\Pi_2$ and $\psi$ is the inclusion $\iota:\Pi_1\to\Pi_2$, we abbreviate $\R[\varphi,\iota]=\R[\varphi]$.
\end{rmk}

The $\sigma$-classes and Reidemeister classes of $\phi_i$ partition the fixed point set of $f$ into a disjoint union of \emph{fixed point classes}: if $i_1,\ldots,i_r$ are representatives of the $\sigma$-classes, then \[
\Fix(f)=\bigsqcup_{\ell=1}^r \bigsqcup_{[\alpha]\in \R[\phi_{i_\ell}]} p(\Fix(\alpha\tilde{f}_{i_\ell})).
\]
To each fixed point class one can associate an \emph{index}, and the fixed point classes with non-zero index are called \emph{essential}. The number of essential fixed point classes is the \emph{Nielsen number} of $f$, which is a lower bound for the number of fixed points among all maps homotopic to $f$: \[
N(f)\leq \text{min}\{\# \Fix(g) \mid g\sim f \}.
\] 
By the above, the Nielsen number can be written as \[
N(f)=\sum_{\ell=1}^r\sum_{[\alpha]\in \R[\phi_{i_\ell}]}\eps_{\alpha,i_\ell}
\]
where \[
\varepsilon_{\alpha,i}=\left\{
\begin{array}{ll}
1 & \text{if $p(\Fix(\alpha\tilde{f}_i))$ is an essential fixed point class of $f$,} \\
0 & \text{otherwise.}
\end{array}
\right.
\]
Since the representatives for the $\sigma$-classes were chosen arbitrarily, and choosing another representative $i_\ell$ for some $\ell$ must give the same result for $N(f)$, the number \[
\sum_{[\alpha]\in \R[\phi_{i_\ell}]}\eps_{\alpha,i_\ell}
\]
does not depend on the chosen representative $i_\ell$. Therefore the formula above can also be written as \begin{equation}\label{eq:N(f)}
N(f)=\sum_{i=1}^n\frac{1}{[\pi:S_i]}\sum_{[\alpha]\in \R[\phi_i]}\eps_{\alpha,i}.
\end{equation}

\section{Infra-nilmanifolds and affine maps}\label{sec:affine}

The maps we consider in this paper are \emph{affine} $n$-valued maps on \emph{infra-nilmanifolds}. An infra-nilmanifold, as defined e.g.\ in \cite{user-guide-inm}, is the quotient of a connected and simply connected nilpotent Lie group $G$ under the action of an almost-Bieberbach group $\pi\subseteq G\rtimes \Aut(G)$, i.e.\ a torsion free cocompact discrete subgroup of $G\rtimes C$, with $C$ a maximal compact subgroup of $\Aut(G)$. Such an almost-Bieberbach group acts properly discontinuously on $G$ and hence $\pi$ is (isomorphic to) the fundamental group of the infra-nilmanifold $\pi\orb G$. As before, we will view $\pi$ as the covering group of the universal covering $G \to \pi\orb G$.

In the special case where this group $\pi$ is entirely contained in $G$, the quotient space is called a nilmanifold. Every infra-nilmanifold $\pi\orb G$ is finitely covered by a nilmanifold $N\orb G$: it follows from the generalised Bieberbach Theorems that $\pi \cap G$ is of finite index in $\pi$, so one can take $N=\pi\cap G$ or any other subgroup 
$N \subseteq \pi\cap G$ which is of finite index in $\pi$. Such a group $N$ is a finitely generated torsion free nilpotent group and is a lattice of the Lie group $G$.

Nilmanifolds have been studied in detail by Mal'cev; see \cite{malcev}. We briefly recall some of his results.

For a nilpotent group $G$ of class $c$, we denote the lower central series by \[
G=\gamma_1(G)\geq \ldots \geq \gamma_c(G) \geq \gamma_{c+1}(G)=1.
\]
For a finitely generated torsion free nilpotent group $N$ of class $c$ we consider another central series whose terms are the \emph{isolators} of the lower central series, \[
N_i=\sqrt[N]{\gamma_i(N)}=\{g\in N \mid \exists k>0:g^k\in \gamma_i(N) \}.
\]
This series has the useful property that for all $i$, the quotient $N_i/N_{i+1}$ is free abelian of finite rank, say $N_i/N_{i+1}\cong \ZZ^{k_i}$. Moreover, for these numbers $k_i$, the factors of the lower central series of $G$ satisfy $\gamma_i(G)/\gamma_{i+1}(G)\cong \RR^{k_i}$. Morphisms of (lattices in) $G$ induce morphisms on these quotients, which can be represented by $k_i\times k_i$ real (or integer) matrices.

If $N,N'$ are lattices in $G$, any morphism $\phi:N\to N'$ extends uniquely to a Lie group morphism $G\to G$. The latter induces a morphism on the Lie algebra $\g$ of $G$. From now on we will refer to this as the Lie algebra morphism induced by $\phi:N\to N'$, and write $\phi_*:\g\to\g$, without explicitly mentioning the extension of $\phi$ to $G$.

\smallskip

Another useful property of connected and simply connected nilpotent Lie groups $G$ is that the exponential map $\exp:\g\to G$ is a diffeomorphism \cite[Theorem 1.2.1]{corwingreenleaf}. In particular, it is invertible, and we denote the inverse map by $\log:G\to \g$. This allows us to define the continuous function \[
G\times \RR\to G: (g,t) \mapsto g^t=\exp(t\log(g))
\]
which we will use later on.

On an infra-nilmanifold we can define the notion of an \emph{affine} $n$-valued map. A map $f:\pi\orb G\to D_n(\pi\orb G)$ is affine if it lifts to a map of the form \[
\tilde{f}:G\to F_n(G,\pi):x\mapsto (g_1\varphi_1(x),\ldots,g_n\varphi_n(x))
\]
with $g_1,\ldots,g_n\in G$ and $\varphi_1,\ldots,\varphi_n\in \End(G)$. We remark that this condition is independent of the chosen lift $\tilde{f}$.
The corresponding maps $\phi_i$ and $\sigma$ from section \ref{sec:intro} satisfy \begin{equation}\label{eq:psi-aff}
g_i\varphi_i(\gamma x)=\phi_i(\gamma)g_{\sigma_\gamma^{-1}(i)}\varphi_{\sigma_\gamma^{-1}(i)}(x)
\end{equation}
for all $\gamma\in \pi$, $x\in G$ and $i\in \{1,\ldots,n\}$.

Nielsen numbers of affine $n$-valued maps on nilmanifolds have been studied in \cite{charlotte2}, where the following formula is proven:


\begin{thm}[{\cite[Theorem 6.13]{charlotte2}}]
Let $N\orb G$ be a nilmanifold. The Nielsen number of an affine map $f:N\orb G\to D_n(N\orb G)$ with lift $\tilde{f}:G\to F_n(G,N):x\mapsto (g_1\varphi_1(x),\ldots,g_n\varphi_n(x))$ is given by \[
N(f)=\sum_{i=1}^n |\det(I-(\varphi_i)_*)|.
\]
\end{thm}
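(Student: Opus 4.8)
The plan is to evaluate the averaging formula \eqref{eq:N(f)} one $\sigma$-class at a time, proving that for each $i$ the contribution $\frac{1}{[\pi:S_i]}\sum_{[\alpha]\in\R[\phi_i]}\eps_{\alpha,i}$ equals $|\det(I-(\varphi_i)_*)|$, and then summing over $i$. Since $N\orb G$ is a nilmanifold, $\pi=N\subseteq G$. Fixing $i$ and $\gamma\in S_i$ we have $\sigma_\gamma^{-1}(i)=i$, so \eqref{eq:psi-aff} reduces to $\tilde f_i(\gamma x)=\phi_i(\gamma)\tilde f_i(x)$ with $\phi_i(\gamma)\in N$. Hence the lift-factor $\tilde f_i(x)=g_i\varphi_i(x)$ descends to a map $\hat f_i:S_i\orb G\to N\orb G$, and together with the covering projection $q:S_i\orb G\to N\orb G$ of degree $[\pi:S_i]$ the fixed points of $f$ in the $\sigma$-class of $i$ become exactly the coincidences of the pair $(\hat f_i,q)$. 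Moreover the relation defining $\R[\phi_i]$ is precisely the Reidemeister coincidence relation of $\phi_i$ and the inclusion $\iota:S_i\hookrightarrow N$, so $\R[\phi_i]=\R[\phi_i,\iota]$ indexes, compatibly with indices, both the fixed point classes $p(\Fix(\alpha\tilde f_i))$ of $f$ and the coincidence classes of $(\hat f_i,q)$. In particular $\sum_{[\alpha]\in\R[\phi_i]}\eps_{\alpha,i}$ is the coincidence Nielsen number $N(\hat f_i,q)$, and it remains to prove $N(\hat f_i,q)=[\pi:S_i]\,|\det(I-(\varphi_i)_*)|$.

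First I would record the linear-algebra input. Comparing $\tilde f_i(\gamma x)=g_i\varphi_i(\gamma)\varphi_i(x)$ with $\phi_i(\gamma)g_i\varphi_i(x)$ gives $\phi_i(\gamma)=g_i\varphi_i(\gamma)g_i^{-1}$ on $S_i$, so on the Lie algebra $(\phi_i)_*=\mathrm{Ad}(g_i)\circ(\varphi_i)_*$. As $G$ is nilpotent, $\mathrm{Ad}(g_i)$ is unipotent and induces the identity on each quotient $\gamma_j(G)/\gamma_{j+1}(G)\cong\RR^{k_j}$, which both $\mathrm{Ad}(g_i)$ and $(\varphi_i)_*$ preserve. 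Since $\det(I-T)$ is the product of the determinants of the maps induced by $T$ on these graded pieces, we obtain $\det(I-(\phi_i)_*)=\det(I-(\varphi_i)_*)$, so the two determinants may be used interchangeably throughout.

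The heart of the argument is the coincidence theory of $(\hat f_i,q)$, two maps between nilmanifolds of equal dimension induced by $\phi_i,\iota:S_i\to N$. I would invoke the coincidence-theoretic counterpart of the classical fact that on a nilmanifold $N=R=|L|$: if $\det(I-(\varphi_i)_*)\neq 0$ then every coincidence is isolated with index $\mathrm{sign}\det(I-(\varphi_i)_*)=\pm1$, all coincidence classes are essential, and $N(\hat f_i,q)=|L(\hat f_i,q)|=\#\R[\phi_i]$; if $\det(I-(\varphi_i)_*)=0$ the Lefschetz coincidence number vanishes and no class is essential. To pin down the value I would compute $\#\R[\phi_i]$ by a covolume argument: filtering by the isolator series and passing to the free-abelian quotients $N_j/N_{j+1}\cong\ZZ^{k_j}$, the count multiplies over the graded pieces, and on each piece it equals $|\det(I-\text{(induced map)})|$ times the index of the relevant sublattice, yielding $\#\R[\phi_i]=[\pi:S_i]\,|\det(I-(\varphi_i)_*)|$. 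Dividing by $[\pi:S_i]$ gives $|\det(I-(\varphi_i)_*)|$ for the contribution of the $\sigma$-class of $i$, and summing over $i$ produces the stated formula.

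The step I expect to be the main obstacle is the precise appearance of the factor $[\pi:S_i]$ in $\#\R[\phi_i]$ in the genuinely non-abelian nilpotent setting. In the single-nilmanifold case $S_i=N$ this factor is absent, and obtaining it here requires careful covolume bookkeeping through the central series, together with the confirmation that the coincidence index has one constant sign (so that $N(\hat f_i,q)$ really equals $\#\R[\phi_i]$ rather than a smaller count). This factor is exactly what the averaging denominator $1/[\pi:S_i]$ in \eqref{eq:N(f)} is designed to absorb.
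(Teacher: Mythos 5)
Your proposal is correct in outline, but note first that the paper does not prove this statement at all: it is quoted verbatim from \cite[Theorem 6.13]{charlotte2}, and the closest in-paper argument is the proof of Theorem~\ref{mainthm}, which specializes to exactly this formula when $\pi=N$ (so that every $A_\alpha=\mathrm{id}$). Measured against that argument (which mirrors the proof in \cite{charlotte2}), your skeleton agrees: evaluate \eqref{eq:N(f)} one $i$ at a time, observe $\phi_i=c_{g_i}\varphi_i$ on $S_i$ so that $\mathrm{Ad}(g_i)$ is unipotent and $\det(I-(\phi_i)_*)=\det(I-(\varphi_i)_*)$ (this is the paper's Lemma~\ref{lem:det}), split into the two cases according to whether $\det(I-(\varphi_i)_*)$ vanishes, and count $R(\phi_i)=[\pi:S_i]\,|\det(I-(\varphi_i)_*)|_\infty$ by a graded covolume computation through the isolator series (this is exactly Lemma~\ref{lem:R}, i.e.\ \cite[Theorems 4.14 and 5.4]{charlotte2}). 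Where you genuinely diverge is the essentiality analysis: you reroute through coincidence Nielsen theory of the pair $(\hat f_i,q)$ on the finite cover $S_i\orb G$ and invoke the Anosov/Jiang-type theorem $N=R=|L|$ for coincidences of maps between nilmanifolds, whereas the paper and \cite{charlotte2} stay on $G$ and work directly with the single-valued lift-factors $\alpha\tilde f_i=g'\varphi_i$: Lemma~\ref{lem:no-ev1} (bijectivity of $x\mapsto\varphi(x)x^{-1}$) gives each such map a unique fixed point of index $\pm 1$ (Corollary~\ref{cor:eps1}), and in the degenerate case the explicit homotopy $x\mapsto g'\varphi_i(x)g_0^t$ of Lemma~\ref{lem:ev1}/Corollary~\ref{cor:eps0} makes every class empty, hence inessential. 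The direct route is elementary and self-contained, and it delivers the constant sign of the index for free (on a nilmanifold all $\alpha\tilde f_i$ have the same linear part $\varphi_i$, so all essential classes carry the sign of $\det(I-(\varphi_i)_*)$ --- note this constancy fails in the infra case, which is why Theorem~\ref{mainthm} must keep the absolute values inside the sum over $\bar\alpha$); your route gets essentiality wholesale from heavy known theorems but must import them.

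Two of your invoked steps deserve shoring up, though neither is a fatal gap. First, the index-compatible dictionary between the fixed point classes $p(\Fix(\alpha\tilde f_i))$, $[\alpha]\in\R[\phi_i]$, and the coincidence classes of $(\hat f_i,q)$ is true but not automatic: it holds because $q$ is a covering, so the local coincidence index of $(\hat f_i,q)$ equals the fixed point index of the local branch $q_{\mathrm{loc}}^{-1}\circ\hat f_i$, which is precisely Schirmer's index of the $n$-valued map; this is essentially the lifting-class framework of \cite{charlotte} and should be cited or proved rather than asserted (also, the coincidence set in $S_i\orb G$ maps onto, not bijectively to, the fixed points in the $\sigma$-class of $i$ --- points fixed via the other branches $\tilde f_{\sigma_\gamma^{-1}(i)}$ reappear as coincidences at translated points --- but the class-level bijection is what your argument actually uses). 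Second, your ``covolume bookkeeping'': the multiplicativity over the graded pieces is legitimate in the nonsingular case exactly because $\det(I-(\varphi_i)_*)=\prod_j\det(I-M_j)\neq 0$ forces every graded block to avoid eigenvalue $1$; in the singular case $R(\phi_i)$ may be infinite, but your dichotomy ($L(\hat f_i,q)=0$, no essential classes) correctly makes the contribution $0$ there, matching Corollary~\ref{cor:eps0}.
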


The goal of this paper is to generalise this formula to infra-nilmanifolds. In the single-valued case, this extension from nilmanifolds to infra-nilmani\-folds was done by Kim, Lee and Lee in \cite{kimleelee}. They proved that any map $f$ on an infra-nilmanifold $\pi\orb G$ can be lifted to a nilmanifold $N\orb G$ that finitely covers $\pi\orb G$, and they expressed the Nielsen number of $f$ as the average of the Nielsen numbers of these lifts.

We cannot follow the same approach in the $n$-valued case, since $n$-valued maps in general no longer admit lifts to a nilmanifold. However, we can decompose the formula for the Nielsen number in a similar way as in \cite{kimleelee}, and then plug in the knowledge about affine maps on nilmanifolds established in \cite{charlotte2} to obtain the desired averaging formula without involving actual lifts to the nilmanifold.

\section{Decomposition of the fixed point set}\label{sec:decomp}

Let $\pi\orb G$ be an infra-nilmanifold, finitely covered by a nilmanifold $N\orb G$. For a general map $f:\pi\orb G\to D_n(\pi\orb G)$, we will decompose the sum (\ref{eq:N(f)}) into a sum over Reidemeister classes in $N$ and in $\pi/N$, generalising the approach in \cite{kimleelee}.

For all $i$, consider the group $S'_i$ generated by $\{\gamma^{[\pi:N]} \mid \gamma\in S_i \}$. This is a normal subgroup of $S_i$, and the quotient group $S_i/S'_i$ is periodic, since the order of every element divides $[\pi:N]$. Also note that $\pi$ (and therefore also $S_i/S'_i$) is polycyclic-by-finite, since its finite index normal subgroup $N$ is finitely generated and nilpotent. Since periodic polycyclic-by-finite groups are automatically finite, it follows that $[S_i:S'_i]$ is finite. Consequently, also \[
[N:S'_i]=\frac{[\pi:S_i][S_i:S'_i]}{[\pi:N]}
\]
is finite. Thus, $S_i'$ is a finite index normal subgroup of $S_i$ and a finite index subgroup of $N$.

Furthermore, note that the image of $S'_i$ under any morphism $S_i\to \pi$ is contained in $N$. Hence the inclusion map $\iota:S_i\to \pi$ and the morphism $\phi_i:S_i\to \pi$ induce morphisms \begin{align*}
\iota':S_i'\to N&:\gamma\mapsto \gamma & \bar{\iota}:S_i/S_i'\to \pi/N&:\gamma S_i'\mapsto \gamma N \\ 
\phi'_i:S_i'\to N&:\gamma\mapsto \phi_i(\gamma) & \bar{\phi}_i:S_i/S_i'\to \pi/N&:\gamma S_i'\mapsto \phi_i(\gamma)N.
\end{align*}
We will decompose the set of Reidemeister classes of $\phi_i$ using Reidemeister classes associated to $\bar{\phi}_i$ and $\phi_i'$.

First, note that \[
\R[\phi_i]=\bigcup_{\bar{\alpha}\in \pi/N}\,\bigcup_{g\in N}\,[g\alpha].
\]
If $[g\alpha]=[g'\alpha']$ in $\R[\phi_i]$, then certainly $[\bar{\alpha}]=[\bar{\alpha}']$ in $\R[\bar{\phi}_i,\bar{\iota}]$, i.e. \[
\exists \bar{\gamma}\in S_i/S'_i: \bar{\alpha}=\bar{\iota}(\bar{\gamma})\bar{\alpha}'\bar{\phi}_i(\bar{\gamma})^{-1}.
\]
Conversely, note that if $[\bar{\alpha}]=[\bar{\alpha}']$ in $\R[\bar{\phi}_i,\bar{\iota}]$, then there is a $g\in N$ so that $[g\alpha]=[\alpha']$ in $\R[\phi_i]$.

For fixed $\alpha$, we have $[g\alpha]=[g'\alpha]$ in $\R[\phi_i]$ if and only if $[g]=[g']$ in $\R[\tau_\alpha\phi_i]$, with $\tau_\alpha:\pi\to\pi:\beta\mapsto \alpha\beta\alpha^{-1}$; i.e. \[
\exists \gamma\in S_i:g=\gamma g'\alpha\phi_i(\gamma)^{-1}\alpha^{-1}.
\]
Thus, the above are disjoint unions when taken over these respective Reidemeister classes: \[
\R[\phi_i]=\bigsqcup_{[\bar{\alpha}]\in \R[\bar{\phi}_i,\bar{\iota}]}\bigsqcup_{[g]\in \R[\tau_\alpha\phi_i]}\,[g\alpha].
\]

Rather than in $\R[\tau_\alpha\phi_i]$, we would like to view the classes $[g]$ with $g\in N$ as elements of $\R[\tau_\alpha\phi'_i]$, where \[
g\sim g'\iff \exists \gamma\in S'_i:g=\gamma g'\alpha\phi'_i(\gamma)^{-1}\alpha^{-1}.
\]
Since $S_i'\subseteq S_i$, it is clear that $[g]=[g']$ in $\R[\tau_\alpha\phi'_i]$ implies $[g]=[g']$ in $\R[\tau_\alpha\phi_i]$, but conversely there can be distinct elements in $\R[\tau_\alpha\phi'_i]$ that are equivalent in $\R[\tau_\alpha\phi_i]$.
The following lemma allows us to describe how many different classes in $\R[\tau_\alpha\phi'_i]$ give rise to the same class in $\R[\tau_\alpha\phi_i]$.
In this lemma, we use $\coin(\varphi,\psi)=\{\gamma\in \Pi_1\mid \varphi(\gamma)=\psi(\gamma) \}$ to denote the group of coincidence points of two morphisms $\varphi,\psi:\Pi_1\to\Pi_2$.

\begin{lem}[{\cite[{Lemma 3.1.4}]{penninckx}}]
Given a commutative diagram of groups with exact rows \[
\begin{tikzcd}
1 \ar[r] & \Gamma_1 \ar[r] \ar[d,"\varphi'"',"\psi'"] & \Pi_1 \ar[r] \ar[d,"\varphi"',"\psi"] & \Pi_1/\Gamma_1 \ar[r] \ar[d,"\bar{\varphi}"',"\bar{\psi}"] & 1 \\
1 \ar[r] & \Gamma_2 \ar[r] & \Pi_2 \ar[r] & \Pi_2/\Gamma_2 \ar[r] & 1
\end{tikzcd}
\]
such that the quotient groups $\Pi_i/\Gamma_i$ are finite. For all $\alpha\in \Pi_2$ and $g\in \Gamma_2$, the number of Reidemeister coincidence classes $[g']\in \R[\tau_\alpha\varphi',\psi']$ such that $[g]=[g']$ in $\R[\tau_\alpha\varphi,\psi]$ is equal to \[
[\coin(\tau_{\bar{\alpha}}\bar{\varphi},\bar{\psi}):\overline{\coin(\tau_{g\alpha}\varphi,\psi)}]
\]
where $\overline{\coin(\tau_{g\alpha}\varphi,\psi)}$ denotes the image of $\coin(\tau_{g\alpha}\varphi,\psi)$ in $\Pi_1/\Gamma_1$.
\end{lem}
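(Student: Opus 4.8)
The plan is to reinterpret both sets of Reidemeister classes as orbit spaces of a twisted-conjugation action and then reduce the problem to counting orbits inside a single orbit. Concretely, I would let $\Pi_1$ act on $\Pi_2$ on the left by $\gamma\cdot z=\psi(\gamma)\,z\,(\tau_\alpha\varphi)(\gamma)^{-1}$, so that its orbit space is exactly $\R[\tau_\alpha\varphi,\psi]$. Since $\varphi',\psi'$ are the restrictions of $\varphi,\psi$ to $\Gamma_1$ and take values in $\Gamma_2$, and since $\alpha$ normalises the normal subgroup $\Gamma_2$, the subset $\Gamma_2\subseteq\Pi_2$ is invariant under the restricted action of $\Gamma_1\leq\Pi_1$, and this restricted action has orbit space $\R[\tau_\alpha\varphi',\psi']$. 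Under this dictionary the quantity to be computed is simply the number of $\Gamma_1$-orbits contained in the set $(\Pi_1\cdot g)\cap\Gamma_2$.

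Next I would identify this intersection. Reducing the action modulo $\Gamma_2$ and using that $g\in\Gamma_2$, the image of $\gamma\cdot g$ in $\Pi_2/\Gamma_2$ equals $\bar\psi(\bar\gamma)\,(\tau_{\bar\alpha}\bar\varphi)(\bar\gamma)^{-1}$, which is trivial precisely when $\bar\gamma\in K:=\coin(\tau_{\bar\alpha}\bar\varphi,\bar\psi)$. Hence $\gamma\cdot g\in\Gamma_2$ if and only if $\gamma$ lies in the full preimage $P$ of $K$ in $\Pi_1$, giving $(\Pi_1\cdot g)\cap\Gamma_2=P\cdot g$, a single transitive $P$-set. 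Writing $P\cdot g\cong P/H$ with $H=\mathrm{Stab}_P(g)$, I would then rewrite the stabiliser condition $\psi(\gamma)\,g\,(\tau_\alpha\varphi)(\gamma)^{-1}=g$ as $\psi(\gamma)=(g\alpha)\varphi(\gamma)(g\alpha)^{-1}=(\tau_{g\alpha}\varphi)(\gamma)$, so that $H=\coin(\tau_{g\alpha}\varphi,\psi)$; the same mod-$\Gamma_2$ reduction shows $H\subseteq P$ automatically.

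It then remains to count the $\Gamma_1$-orbits in $P/H$, i.e.\ the double cosets $\Gamma_1\orb P/H$. Here is the key point, and the one I would be most careful about: because $\Gamma_1$ is normal in $\Pi_1$, hence in $P$, every double coset satisfies $\Gamma_1 pH=p\,\Gamma_1 H$, and $\Gamma_1 H$ is a subgroup of $P$; thus the double cosets collapse to the left cosets of $\Gamma_1 H$, and their number is $[P:\Gamma_1 H]=[P/\Gamma_1:\Gamma_1H/\Gamma_1]=[K:\bar H]$, which is exactly the claimed index $[\coin(\tau_{\bar\alpha}\bar\varphi,\bar\psi):\overline{\coin(\tau_{g\alpha}\varphi,\psi)}]$. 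The finiteness of $\Pi_1/\Gamma_1$ enters only to guarantee that $K$, and hence this index, is finite. The genuine content of the argument is the two identifications, namely $(\Pi_1\cdot g)\cap\Gamma_2=P\cdot g$ and $\mathrm{Stab}_P(g)=\coin(\tau_{g\alpha}\varphi,\psi)$, together with the normality collapse of double cosets to ordinary cosets; the remaining verifications, such as the well-definedness of the comparison map $\R[\tau_\alpha\varphi',\psi']\to\R[\tau_\alpha\varphi,\psi]$ and the invariance of $\Gamma_2$, are routine.
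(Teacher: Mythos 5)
Your proof is correct, but note that the paper itself offers nothing to compare it with: the lemma is imported verbatim from \cite[Lemma 3.1.4]{penninckx} and never proved in this paper, so your argument is a genuine addition rather than a variant of an in-paper proof. Checking your steps: the twisted action $\gamma\cdot z=\psi(\gamma)z(\tau_\alpha\varphi)(\gamma)^{-1}$ is indeed a left action of $\Pi_1$ on $\Pi_2$ whose orbits are the classes of $\R[\tau_\alpha\varphi,\psi]$, and $\Gamma_2$ is invariant under the restriction to $\Gamma_1$ because $\psi(\Gamma_1),\varphi(\Gamma_1)\subseteq\Gamma_2$ and $\Gamma_2\trianglelefteq\Pi_2$; since any $\Gamma_1$-orbit lies in a single $\Pi_1$-orbit, the quantity to count is exactly the number of $\Gamma_1$-orbits in $(\Pi_1\cdot g)\cap\Gamma_2$, as you say. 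The reduction mod $\Gamma_2$ correctly identifies this intersection with $P\cdot g$, where $P$ is the preimage of $K=\coin(\tau_{\bar\alpha}\bar\varphi,\bar\psi)$ (a subgroup, being a coincidence set of homomorphisms), and the stabilizer computation $\psi(\gamma)g(\tau_\alpha\varphi)(\gamma)^{-1}=g\iff\psi(\gamma)=(\tau_{g\alpha}\varphi)(\gamma)$ together with the automatic inclusion $\mathrm{Stab}_{\Pi_1}(g)\subseteq P$ gives $H=\coin(\tau_{g\alpha}\varphi,\psi)$. The final collapse of double cosets $\Gamma_1\orb P/H$ to cosets of the subgroup $\Gamma_1H$ via normality of $\Gamma_1$, yielding $[P:\Gamma_1H]=[K:\overline{\coin(\tau_{g\alpha}\varphi,\psi)}]$, is clean, and you are right that finiteness of $\Pi_1/\Gamma_1$ enters only to make this index finite. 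The original proof in the cited reference proceeds by essentially the same element-wise computation with Reidemeister representatives; your orbit--stabilizer packaging is tidier and makes transparent why the answer is an index of subgroups rather than merely a cardinality.
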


\noindent
Applying this to the diagram \[
\begin{tikzcd}
1 \ar[r] & S'_i \ar[r] \ar[d,"\phi'_i"',"\iota'"] & S_i \ar[r] \ar[d,"\phi_i"',"\iota"] & S_i/S'_i \ar[r] \ar[d,"\bar{\phi}_i"',"\bar{\iota}"] & 1 \\
1 \ar[r] & N \ar[r] & \pi \ar[r] & \pi/N \ar[r] & 1
\end{tikzcd}
\]
learns that for a given $g\in N$, the number of classes $[g']\in \R[\tau_\alpha\phi'_i]$ such that $[g]=[g']$ in $\R[\tau_\alpha\phi_i]$ is equal to \[
[\coin(\tau_{\bar{\alpha}}\bar{\phi}_i,\bar{\iota}):\overline{\fix(\tau_{g\alpha}\phi_i)}]=\frac{|\coin(\tau_{\bar{\alpha}}\bar{\phi}_i,\bar{\iota})|}{|\overline{\fix(\tau_{g\alpha}\phi_i)}|},
\]
where $\overline{\fix(\tau_{g\alpha}\phi_i)}$ denotes the image of $\fix(\tau_{g\alpha}\phi_i)=\coin(\tau_{g\alpha}\phi_i,\iota)$ in $S_i/S_i'$.
Therefore the sum (\ref{eq:N(f)}) decomposes as \[
N(f)=\sum_{i=1}^n\frac{1}{[\pi:S_i]}\sum_{[\bar{\alpha}]\in \R[\bar{\phi}_i,\bar{\iota}]}\sum_{[g]\in \R[\tau_\alpha\phi'_i]}\frac{|\overline{\fix(\tau_{g\alpha}\phi_i)}|}{|\coin(\tau_{\bar{\alpha}}\bar{\phi}_i,\bar{\iota})|}\,\varepsilon_{g\alpha,i}.
\]

To further simplify this formula, consider the action of $S_i/S'_i$ on $\pi/N$ given by \[
\bar{\gamma}\cdot \bar{\alpha}=\bar{\iota}(\bar{\gamma})\bar{\alpha}\bar{\phi}_i(\bar{\gamma})^{-1}.
\]
The orbit of an element $\bar{\alpha}$ for this action is its Reidemeister class $[\bar{\alpha}]\in \R[\bar{\phi}_i,\bar{\iota}]$. The stabilizer of $\bar{\alpha}$ is the set \[
\{\bar{\gamma}\in S_i/S'_i \mid \bar{\iota}(\bar{\gamma})\bar{\alpha}\bar{\phi}_i(\bar{\gamma})^{-1}=\bar{\alpha} \}=\coin(\tau_{\bar{\alpha}}\bar{\phi}_i,\bar{\iota}).
\]
Therefore we have \[
|[\bar{\alpha}]|\cdot |\coin(\tau_{\bar{\alpha}}\bar{\phi}_i,\bar{\iota})|=|S_i/S'_i|=[S_i:S'_i].
\]
This reduces the expression for $N(f)$ to \begin{align}
N(f)&=\sum_{i=1}^n\frac{1}{[\pi:S_i]}\sum_{\bar{\alpha}\in \pi/N}\frac{1}{|[\bar{\alpha}]|}\sum_{[g]\in \R[\tau_\alpha\phi'_i]}\frac{|\overline{\fix(\tau_{g\alpha}\phi_i)}|}{|\coin(\tau_{\bar{\alpha}}\bar{\phi}_i,\bar{\iota})|}\,\varepsilon_{g\alpha,i} \notag \\&=\sum_{i=1}^n\frac{1}{[\pi:S'_i]}\sum_{\bar{\alpha}\in \pi/N}\sum_{[g]\in \R[\tau_\alpha\phi'_i]}|\overline{\fix(\tau_{g\alpha}\phi_i)}|\,\varepsilon_{g\alpha,i}. \label{eq:decomp-N}
\end{align}

\section{Averaging formula for affine maps}

Now we apply this to the special case where $f$ is an affine map with lift $\tilde{f}:G\to F_n(G,\pi):x\mapsto (g_1\varphi_1(x),\ldots,g_n\varphi_n(x))$. In this case, the formula above can be reduced to an expression purely in terms of the morphisms $\varphi_i$.

Recall that, for a morphism $\phi':S'\to N$ between lattices in a connected and simply connected nilpotent Lie group $G$, we use $(\phi')_*:\g\to\g$ to denote the Lie algebra morphism induced by the unique extension of $\phi'$ to $G\to G$.

\begin{lem}\label{lem:fix}
Suppose $\pi\orb G$ is an infra-nilmanifold finitely covered by a nilmanifold $N\orb G$, and $S$ is a finite index subgroup of $\pi$. Let $S'$ be the subgroup generated by $\{\gamma^{[\pi:N]} \mid \gamma\in S \}$.
Let $\phi:S\to \pi$ be a morphism and $\phi':S'\to N$ its restriction to $S'$. If $\det(I-(\phi')_*)\neq 0$, then $\fix(\phi)=1$.
\end{lem}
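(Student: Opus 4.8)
The plan is to reduce the statement about the fixed subgroup of $\phi$ inside $S$ to the corresponding statement about $\phi'$ inside the lattice $S'\subseteq G$, where the hypothesis $\det(I-(\phi')_*)\neq 0$ can be exploited through the exponential map. The reason for passing to $S'$ is that a fixed-point equation $\phi(\gamma)=\gamma$ in general lives in $G\rtimes C$, where the ``linear part'' $(\phi')_*$ has no direct meaning; by contrast, every element of $S'$ lies in $N\subseteq G$, so there the equation takes place inside the nilpotent Lie group $G$ and becomes accessible to the Lie-theoretic machinery recalled in Section~\ref{sec:affine}.

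For the reduction I would take an arbitrary $\gamma\in\fix(\phi)$, so $\gamma\in S$ with $\phi(\gamma)=\gamma$, and raise it to the power $[\pi:N]$. Since $N$ is normal of index $[\pi:N]$ in $\pi$, the image of $\gamma$ in $\pi/N$ has order dividing $[\pi:N]$, whence $\gamma^{[\pi:N]}\in N\subseteq G$; moreover $\gamma^{[\pi:N]}$ is by definition one of the generators of $S'$, so it lies in $S'$. Because $\phi$ is a homomorphism and $\phi'$ is its restriction, $\phi'(\gamma^{[\pi:N]})=\phi(\gamma)^{[\pi:N]}=\gamma^{[\pi:N]}$, so $\gamma^{[\pi:N]}\in\fix(\phi')$. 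Granting for a moment that $\fix(\phi')=1$, this forces $\gamma^{[\pi:N]}=1$, and since $\pi$ is torsion free this upgrades to $\gamma=1$, i.e. $\fix(\phi)=1$.

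It remains to prove the core claim $\fix(\phi')=1$. Here I would use that $S'$ is a lattice in $G$ (it is a finite-index subgroup of the lattice $N$, by the same counting argument already given for $S_i'$ in Section~\ref{sec:decomp}), so by Mal'cev's theorem $\phi'$ extends uniquely to a Lie group endomorphism $\psi:G\to G$ whose differential at the identity is exactly $(\phi')_*$. Let $g\in S'$ satisfy $\phi'(g)=g$; then $\psi(g)=g$ as an equation in $G$. Writing $g=\exp(X)$ for the unique $X\in\g$ (the exponential being a diffeomorphism for a connected simply connected nilpotent $G$) and using the intertwining relation $\psi\circ\exp=\exp\circ\,(\phi')_*$, injectivity of $\exp$ gives $(\phi')_*(X)=X$, i.e. $(I-(\phi')_*)(X)=0$. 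The hypothesis $\det(I-(\phi')_*)\neq 0$ makes $I-(\phi')_*$ invertible, so $X=0$ and $g=1$.

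I do not anticipate a serious obstacle: once the reduction to $S'$ is in place, the heart of the matter is the standard intertwining of a Lie group endomorphism with its differential via $\exp$. The points requiring care are bookkeeping rather than conceptual, namely confirming that $\gamma^{[\pi:N]}$ genuinely lands in both $N$ and $S'$, that $S'$ is a lattice so that the extension $\psi$ and hence $(\phi')_*$ are defined, and that torsion-freeness of $\pi$ legitimately passes from $\gamma^{[\pi:N]}=1$ to $\gamma=1$. The only genuinely substantive inputs are Mal'cev's unique-extension theorem and the diffeomorphism property of $\exp$.
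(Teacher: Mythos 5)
Your proposal is correct and takes essentially the same approach as the paper: reduce $\fix(\phi)$ to $\fix(\phi')$ by noting that a fixed $\gamma\in S$ yields the fixed point $\gamma^{[\pi:N]}\in S'$ of $\phi'$ and invoking torsion-freeness, and kill $\fix(\phi')$ via the exponential diffeomorphism together with the invertibility of $I-(\phi')_*$. The only differences are cosmetic: you present the reduction before the core claim and spell out the intertwining $\psi\circ\exp=\exp\circ(\phi')_*$, which the paper uses implicitly through $\log$.
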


\begin{proof}
We first prove that $\fix(\phi')=1$. To this end, let $\exp:\g\to G$ denote the exponential map of $G$, and $\log:G\to \g$ its inverse.

Suppose $\gamma\in S'$ is a fixed point of $\phi'$. Then $X=\log(\gamma)$ is a fixed point of $(\phi')_*$, i.e. $(I-(\phi')_*)(X)=0$. Since $\det(I-(\phi')_*)\neq 0$, the only element $X$ satisfying this equation is $0$. It follows that $\gamma=\exp(0)=1$.

Now, suppose $\gamma\in S$ is a fixed point of $\phi$, and denote $k=[\pi:N]$. Then $\phi(\gamma^k)=\phi(\gamma)^k=\gamma^k$, so $\gamma^k\in S'$ is a fixed point of $\phi'$. By the above, we have $\gamma^k=1$. Since $S$ is torsion free, it follows that $\gamma=1$.
\end{proof}

We will now show that for an affine map, the number $\eps_{g\alpha,i}$ in (\ref{eq:decomp-N}) is independent of $g\in N$, using the results from \cite{charlotte2}. We distinguish two cases. In both cases we will need the following lemma.

\begin{lem}[{\cite[Lemma 6.4]{charlotte2}}]\label{lem:no-ev1}
Let $G$ be a connected and simply connected nilpotent Lie group, and $\varphi\in\End(G)$. If $\varphi_*$ has no eigenvalue $1$, the map $G\to G:x\mapsto \varphi(x)x^{-1}$ is bijective.
\end{lem}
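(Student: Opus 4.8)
The plan is to prove the statement by induction on the nilpotency class $c$ of $G$, exploiting the fact that the lower central series provides a chain of characteristic (hence $\varphi$-invariant), connected, simply connected subgroups. For the base case $c=1$, the group $G$ is abelian, so $\exp:\g\to G$ is a group isomorphism and $\varphi(\exp X)=\exp(\varphi_*X)$. The map $x\mapsto\varphi(x)x^{-1}$ then corresponds, under $\exp$, to the linear map $\varphi_*-I$ on $\g$. Since $\varphi_*$ has no eigenvalue $1$, this linear map is invertible, and conjugating by the diffeomorphism $\exp$ shows that $x\mapsto\varphi(x)x^{-1}$ is bijective.

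For the inductive step I would take $Z=\gamma_c(G)$, the last nontrivial term of the lower central series. This is a closed, connected, abelian subgroup of $G$ which is central and invariant under $\varphi$. Writing $\mathfrak{z}=\mathrm{Lie}(Z)$, this is a $\varphi_*$-invariant subspace of $\g$, so the eigenvalues of $\varphi_*$ split (with multiplicities) into those of $\varphi_*|_{\mathfrak{z}}$ together with those of the induced map on $\g/\mathfrak{z}$. In particular, neither the restriction of $\varphi$ to $Z$ nor the induced endomorphism $\bar\varphi$ of $G/Z$ has a Lie-algebra eigenvalue $1$, so the induction hypothesis applies to $G/Z$ (of class $c-1$) and the base case applies to $Z$.

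The key computational observation is the twist formula: for $x\in G$ and $w\in Z$, centrality and $\varphi$-invariance of $Z$ give $\varphi(w)w^{-1}\in Z$, whence
\[
\varphi(xw)(xw)^{-1}=\varphi(x)\varphi(w)w^{-1}x^{-1}=\bigl(\varphi(x)x^{-1}\bigr)\varphi(w)w^{-1}.
\]
Surjectivity then follows by lifting: given $h\in G$, I first solve the equation modulo $Z$ using the induction hypothesis on $G/Z$ to obtain some $x_0$ with $\varphi(x_0)x_0^{-1}=hz$ for a suitable $z\in Z$, and then use the base case on $Z$ to find the unique $w\in Z$ with $\varphi(w)w^{-1}=z^{-1}$; the twist formula shows that $x_0w$ is a preimage of $h$. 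Injectivity is dual: if two points have the same image, they agree modulo $Z$ by the induction hypothesis, hence differ by some $w\in Z$, and the twist formula together with injectivity of $w\mapsto\varphi(w)w^{-1}$ on $Z$ forces $w=1$.

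The main obstacle is that $x\mapsto\varphi(x)x^{-1}$ is not a group homomorphism, so one cannot argue directly via kernels and images; the whole argument must be threaded through the central extension $1\to Z\to G\to G/Z\to 1$. The linchpin making this work is precisely the twist formula, which depends essentially on $Z$ being both central (so that $\varphi(w)w^{-1}$ may be commuted past $x^{-1}$) and $\varphi$-invariant (so that $\varphi(w)w^{-1}$ lands back in $Z$), and on the eigenvalue hypothesis being inherited by both $Z$ and $G/Z$.
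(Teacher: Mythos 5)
Your proof is correct. Note that this paper does not actually prove the lemma---it is imported as \cite[Lemma 6.4]{charlotte2}---so there is no in-paper argument to compare against; your induction on the nilpotency class through the central extension $1\to\gamma_c(G)\to G\to G/\gamma_c(G)\to 1$ is the standard route to this fact, and it is precisely the inductive scheme the paper itself employs for the companion Lemma~\ref{lem:ev1}. All the delicate points are in place: $\gamma_c(G)$ is closed, connected, central and fully invariant, so the twist identity $\varphi(xw)(xw)^{-1}=\bigl(\varphi(x)x^{-1}\bigr)\varphi(w)w^{-1}$ for $w\in\gamma_c(G)$ is legitimate; the characteristic polynomial of $\varphi_*$ factors as that of its restriction to the invariant subspace $\gamma_c(\g)$ times that of the induced map on $\g/\gamma_c(\g)$, so the no-eigenvalue-$1$ hypothesis is inherited by both $\varphi|_{\gamma_c(G)}$ and $\bar\varphi$; and $G/\gamma_c(G)$ is again connected, simply connected and nilpotent of class $c-1$, so the induction closes, with surjectivity obtained by first solving modulo $\gamma_c(G)$ and then correcting by the central factor, and injectivity by the dual argument. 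The only cosmetic remark is that, since your base case establishes the statement for all simply connected abelian groups, the separate appeal to ``the base case on $Z$'' is just the case $c=1$ of the statement being proved, so the induction hypothesis could be phrased once for all groups of class at most $c-1$; this changes nothing of substance.
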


The first case is the following. 

\begin{nota}
For $\alpha\in\pi\subseteq G\rtimes \Aut(G)$ we will write $\alpha=(a_\alpha,A_\alpha)$ with $a_\alpha\in G$ and $A_\alpha\in \Aut(G)$.
\end{nota}

\begin{cor}\label{cor:eps1}
Let $f:\pi\orb G\to D_n(\pi\orb G)$ be an affine $n$-valued map with lift $\tilde{f}:x\mapsto (g_1\varphi_1(x),\ldots,g_n\varphi_n(x))$ and corresponding numbers $\varepsilon_{g\alpha,i}$. Fix $i$ and $\alpha\in \pi$. If $\det(I-(A_\alpha\varphi_i)_*)\neq 0$, then $\varepsilon_{g\alpha,i}=1$ for all $g\in N$.
\end{cor}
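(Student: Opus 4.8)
The plan is to show that under the hypothesis the fixed point class $p(\Fix((g\alpha)\tilde{f}_i))$ consists of a single, nondegenerate fixed point, whence its index is $\pm 1$ and $\eps_{g\alpha,i}=1$. Writing $\alpha=(a_\alpha,A_\alpha)$ and $g\in N\subseteq G$, the deck transformation $g\alpha=(g a_\alpha,A_\alpha)$ acts on $G$ by $x\mapsto g a_\alpha A_\alpha(x)$, so composing with the lift-factor $\tilde{f}_i(x)=g_i\varphi_i(x)$ gives
\[
(g\alpha)\tilde{f}_i(x)=g a_\alpha A_\alpha(g_i)\,(A_\alpha\varphi_i)(x)=c\,\psi(x),
\]
with $c=g a_\alpha A_\alpha(g_i)\in G$ and $\psi=A_\alpha\varphi_i\in\End(G)$. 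The crucial observation is that the linear part $\psi$ is independent of $g$: changing $g$ only alters the translational factor $c$. Hence it suffices to treat a fixed but arbitrary $g\in N$, and the conclusion will hold uniformly.

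First I would establish uniqueness of the fixed point. A point $x\in G$ is fixed by $(g\alpha)\tilde{f}_i$ precisely when $x=c\psi(x)$, equivalently $x\psi(x)^{-1}=c$. Since $\det(I-(A_\alpha\varphi_i)_*)=\det(I-\psi_*)\neq 0$, the induced Lie algebra morphism $\psi_*$ has no eigenvalue $1$, so Lemma~\ref{lem:no-ev1} shows that $x\mapsto \psi(x)x^{-1}$ is bijective. Composing with the inversion diffeomorphism of $G$ (note $(\psi(x)x^{-1})^{-1}=x\psi(x)^{-1}$) yields that $x\mapsto x\psi(x)^{-1}$ is bijective as well. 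Therefore the equation $x\psi(x)^{-1}=c$ has exactly one solution $\tilde{x}_0$, so $\Fix((g\alpha)\tilde{f}_i)=\{\tilde{x}_0\}$ and the fixed point class $p(\Fix((g\alpha)\tilde{f}_i))$ is a single point.

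It then remains to check essentiality, i.e.\ that this class has nonzero index. Following the index theory for $n$-valued maps recalled in Section~\ref{sec:intro} (cf.\ \cite{charlotte}), the index of the class equals the ordinary fixed point index of the single-valued lift-factor $(g\alpha)\tilde{f}_i$ at $\tilde{x}_0$. Passing to exponential coordinates on the simply connected nilpotent group $G$, the affine map $x\mapsto c\psi(x)$ is, near $\tilde{x}_0$, governed by its linearization $\psi_*$, so its fixed point index is $\mathrm{sign}\det(I-\psi_*)$, exactly as in the local computation underlying \cite[Theorem~6.13]{charlotte2}. As $\det(I-\psi_*)\neq 0$ this index equals $\pm 1$, so the class is essential and $\eps_{g\alpha,i}=1$; since $\psi$ does not depend on $g$, this holds for every $g\in N$.

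The main obstacle I anticipate is this last, index-theoretic step: one must justify carefully that the $n$-valued index of the class reduces to the single-valued fixed point index of the lift-factor, and that for an affine self-map of $G$ this index is controlled entirely by $\det(I-\psi_*)$. The uniqueness argument is a clean application of Lemma~\ref{lem:no-ev1}, but pinning down essentiality is where the real content lies, relying on the local index computation for affine maps imported from \cite{charlotte2}.
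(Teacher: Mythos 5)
Your proposal is correct and follows essentially the same route as the paper: the same reduction $g\alpha\tilde{f}_i=g'(A_\alpha\varphi_i)$ with $g'=ga_\alpha A_\alpha(g_i)$, uniqueness of the fixed point via Lemma~\ref{lem:no-ev1}, and identification of the class index with the index of that single fixed point. The one difference is cosmetic: where you sketch a linearization argument to get index $\mathrm{sign}\det(I-\psi_*)$ (the step you rightly flag as the real content), the paper simply invokes \cite[Lemma~6.10]{charlotte2}, which states directly that if $x\mapsto\varphi(x)x^{-1}$ is bijective then $x\mapsto h\varphi(x)$ has index $\pm 1$ at its unique fixed point, which is all that is needed.
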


\begin{proof}
Pick $g\in N$. Note that for all $x\in G$, \[
g\alpha\tilde{f}_i(x)=g\alpha g_i\varphi_i(x)=ga_\alpha A_\alpha(g_i)A_\alpha\varphi_i(x).
\]
So if we denote $ga_\alpha A_\alpha(g_i)=g'$, then $g\alpha\tilde{f}_i=g'A_\alpha\varphi_i$.

By Lemma \ref{lem:no-ev1}, the map $G\to G: x\mapsto A_\alpha\varphi_i(x)x^{-1}$ is a bijection. Hence there is a unique $x\in G$ such that $A_\alpha\varphi_i(x)x^{-1}=(g')^{-1}$, i.e. such that $g'A_\alpha\varphi_i(x)=x$. That is, the map $g\alpha\tilde{f}_i=g'A_\alpha\varphi_i$ has a unique fixed point. The index of the fixed point class $p(\Fix(g\alpha\tilde{f}_i))$ is then equal to the index of this unique fixed point of $g'A_\alpha\varphi_i$.

In \cite[Lemma 6.10]{charlotte2} it is proven that, if $\varphi:G\to G$ is a morphism so that $G\to G:x\mapsto \varphi(x)x^{-1}$ is a bijection, then for any $h\in G$, the index of $G\to G:x\mapsto h\varphi(x)$ at its unique fixed point is $\pm 1$. Applying this to $\varphi=A_\alpha\varphi_i$ and $h=g'$ gives the desired result.
\end{proof}

For the second case, we can use the following lemma, which is a special case of \cite[Lemma 6.6]{charlotte2}. We give a more polished version of the proof adapted to our specific needs.

\begin{lem}\label{lem:ev1}
Let $G$ be a connected and simply connected nilpotent Lie group, and $\varphi\in\End(G)$. If $\varphi_*$ has eigenvalue $1$, then for all $g\in G$ there is a $g_0\in G$ such that $g\varphi(x)g_0\neq x$ for all $x\in G$.
\end{lem}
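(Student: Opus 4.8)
The plan is to recast the conclusion as a non-surjectivity statement. For fixed $g$, the equation $g\varphi(x)g_0=x$ is solvable in $x$ if and only if $g_0=\varphi(x)^{-1}g^{-1}x$ for some $x$; hence a $g_0$ with $g\varphi(x)g_0\neq x$ for all $x$ exists precisely when the map $\Theta:G\to G$, $\Theta(x)=\varphi(x)^{-1}g^{-1}x$, fails to be surjective. So I would reduce the lemma to showing that \emph{if $\varphi_*$ has eigenvalue $1$, then $\Theta$ is not surjective} (for every $g\in G$), and then take $g_0$ outside $\mathrm{im}(\Theta)$.

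I would prove this by induction on the nilpotency class $c$ of $G$. In the abelian base case, $\log$ turns $\Theta$ into the affine map $X\mapsto (I-\varphi_*)(X)-\log g$ on $\g\cong\RR^{k}$; an eigenvalue $1$ of $\varphi_*$ makes $I-\varphi_*$ non-surjective, so $\Theta$ is not surjective. For the inductive step I pass to $\bar G=G/\gamma_c(G)$, which is again connected, simply connected and nilpotent of class $c-1$, and on which $\varphi$ induces $\bar\varphi$ with $\Theta$ covering the corresponding map $\bar\Theta$ (that is, $\mathrm{pr}\circ\Theta=\bar\Theta\circ\mathrm{pr}$). Since $\g_c$ is $\varphi_*$-invariant, the eigenvalues of $\varphi_*$ split into those of $\bar\varphi_*$ on $\g/\g_c$ together with those of $\varphi_*|_{\g_c}$. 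If $\bar\varphi_*$ already has eigenvalue $1$, the induction hypothesis gives that $\bar\Theta$ is not surjective, and then neither is $\Theta$, since anything lying over a fibre missed by $\bar\Theta$ is missed by $\Theta$.

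The remaining, and main, case is when $\bar\varphi_*$ has no eigenvalue $1$ but $\varphi_*|_{\g_c}$ does, so that the eigenvalue $1$ is ``hidden'' in the centre $Z=\gamma_c(G)=\exp(\g_c)$ and invisible on the quotient. Here I would first check that $\bar\Theta$ is a bijection: solving $\bar\Theta(\bar x)=\bar c$ amounts to finding a fixed point of $\bar x\mapsto \bar g\,\bar\varphi(\bar x)\,\bar c$, and after absorbing the right translation into a conjugated endomorphism $\tau_{\bar c^{-1}}\bar\varphi$ (whose linearisation has the same eigenvalues as $\bar\varphi_*$, because $\mathrm{Ad}(\bar c^{-1})$ is unipotent and acts trivially on the graded pieces of the lower central series), the unique-fixed-point argument of Corollary~\ref{cor:eps1}, based on Lemma~\ref{lem:no-ev1}, applies. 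Next, using that $Z$ is central and $\varphi(Z)\subseteq Z$, a direct computation gives $\Theta(xz)=\Theta(x)\cdot z\varphi(z)^{-1}$ for $z\in Z$, so $\Theta$ sends the coset $xZ$ onto $\Theta(x)K$ with $K=\{z\varphi(z)^{-1}\mid z\in Z\}=\exp\!\big((I-\varphi_*|_{\g_c})(\g_c)\big)$. Because $\varphi_*|_{\g_c}$ has eigenvalue $1$, the subgroup $K$ is proper in $Z$; combined with the bijectivity of $\bar\Theta$, this means that over each fibre of the projection $\Theta$ reaches only a single proper coset of $K$, so $\Theta$ is not surjective and the proof concludes. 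I expect this hidden-eigenvalue case to be the crux: it is exactly where the obstruction cannot be seen on the abelianisation and must instead be localised in $\gamma_c(G)$, with the bijectivity of $\bar\Theta$ used to propagate the defect to all of $G$.
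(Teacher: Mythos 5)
Your proof is correct and takes essentially the same route as the paper: induction on the nilpotency class, the same case split according to whether the eigenvalue $1$ survives in $G/\gamma_c(G)$ or is confined to the central subgroup $\gamma_c(G)$, and the same appeal to Lemma~\ref{lem:no-ev1} in the hidden case. The only real difference is presentational: you phrase the conclusion as non-surjectivity of $\Theta(x)=\varphi(x)^{-1}g^{-1}x$ and prove that $\bar\Theta$ is bijective on all of $G/\gamma_c(G)$ (which forces the extra, correct, observation that composing with an inner automorphism does not change the eigenvalues, a fact the paper records separately in the proof of Lemma~\ref{lem:det}), whereas the paper solves only the single equation $\bar{g}\bar{\varphi}(\bar{x}_0)=\bar{x}_0$ and corrects $g_0$ by the central element $z^{-1}$; likewise your subgroup $K=\exp\bigl((I-\varphi_*|_{\gamma_c(\mathfrak{g})})(\gamma_c(\mathfrak{g}))\bigr)$ being proper replaces the paper's appeal to the (abelian) induction hypothesis on $\gamma_c(G)$.
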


\begin{proof}
We prove the claim by induction on the nilpotency class $c$ of $G$.

If $c=1$, then $G=\RR^k$ for some $k$ and $\varphi=\varphi_*$ is a linear map represented by a matrix which has eigenvalue $1$. We can choose a basis $\{v_1,\ldots,v_k\}$ of $\RR^k$ with respect to which the matrix of $\varphi$ is of the form \[
\begin{bmatrix}
1 & 0 & \cdots & 0 \\
* & * & \cdots & * \\
\vdots & \vdots & \ddots & \vdots \\
* & * & \cdots & *
\end{bmatrix}.
\]
That is, if we write $V=\langle v_2,\ldots,v_k\rangle$, then $\varphi(\mu v_1+v)\equiv \mu v_1\mod V$ for all $\mu\in \RR$ and $v\in V$.

Take $\lambda\in \RR$ such that $g\equiv \lambda v_1\mod V$. We claim that $g_0=(1-\lambda)v_1$ does the job. Indeed, suppose $x\in \RR^k$ satisfies $g+\varphi(x)+g_0=x$. If we write $x=\mu v_1+v$ with $v\in V$, this reduces to \[
\lambda v_1+\mu v_1+(1-\lambda)v_1 \equiv \mu v_1\mod V,
\]
or equivalentely, $v_1\in V$, which is a contradiction.

\medskip

Now suppose $c>1$ and the result holds for all groups of smaller nilpotency class. For all $i=1,\ldots,c$, let $M_i$ be the matrix of the morphism induced by $\varphi$ on $\gamma_i(G)/\gamma_{i+1}(G)$. We distinguish two cases: either one of the matrices $M_1,\ldots,M_{c-1}$ has eigenvalue $1$, or $M_c$ is the only matrix with eigenvalue $1$.

In the first case, we can apply the induction hypothesis to the map $G/\gamma_c(G)\to G/\gamma_c(G):\bar{x}\mapsto \overline{\varphi(x)}$ (and the element $\bar{g}$ for $g$) to obtain a $g_0\in G$ such that $\overline{g\varphi(x)g_0}\neq\bar{x}$ in $G/\gamma_c(G)$, so in particular $g\varphi(x)g_0\neq x$ in $G$, for all $x\in G$.

In the second case, we can apply the induction hypothesis to the restriction of $\varphi$ to $\gamma_c(G)$ (and the identity element for $g$) to obtain a $g_0\in \gamma_c(G)$ such that $\varphi(x)g_0\neq x$ for all $x\in \gamma_c(G)$. On the other hand, we can apply Lemma \ref{lem:no-ev1} to the map $G/\gamma_c(G)\to G/\gamma_c(G):\bar{x}\mapsto \overline{\varphi(x)}$, which has no eigenvalue $1$. It follows that there is a unique $\bar{x}_0\in G/\gamma_c(G)$ such that $\overline{x_0\varphi(x_0)^{-1}}=\bar{g}$. Choose a representative $x_0\in G$ for $\bar{x}_0$, and take $z\in \gamma_c(G)$ such that $g\varphi(x_0)=x_0z$. We claim that the element $g_0z^{-1}$ does the job. Indeed, suppose $x\in G$ satisfies $g\varphi(x)g_0z^{-1}=x$. By uniqueness of $\bar{x}_0$, this element $x$ must be of the form $x_0z'$ for some $z'\in \gamma_c(G)$. Then we have \begin{align*}
g\varphi(x_0z')g_0z^{-1}=x_0z' &\iff g\varphi(x_0)\varphi(z')g_0z^{-1}=x_0z' \\
&\iff x_0z\varphi(z')g_0z^{-1}=x_0z' \\
&\iff \varphi(z')g_0=z',
\end{align*}
which is impossible for $z'\in \gamma_c(G)$, by definition of $g_0$.
\end{proof}

\begin{cor}\label{cor:eps0}
Let $f:\pi\orb G\to D_n(\pi\orb G)$ be an affine $n$-valued map with lift $\tilde{f}:x\mapsto (g_1\varphi_1(x),\ldots,g_n\varphi_n(x))$ and corresponding numbers $\varepsilon_{g\alpha,i}$. Fix $i$ and $\alpha\in \pi$. If $\det(I-(A_\alpha\varphi_i)_*)= 0$, then $\varepsilon_{g\alpha,i}=0$ for all $g\in N$.
\end{cor}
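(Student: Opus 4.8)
The plan is to show that $f$ is $n$-valued homotopic to a map whose fixed point class corresponding to $g\alpha\tilde{f}_i$ is empty; since essentiality of a fixed point class is a homotopy invariant, this forces $\eps_{g\alpha,i}=0$. Fix $g\in N$. Exactly as in the proof of Corollary~\ref{cor:eps1}, writing $g'=ga_\alpha A_\alpha(g_i)$ we have $g\alpha\tilde{f}_i(x)=g'A_\alpha\varphi_i(x)$ for all $x\in G$. The hypothesis $\det(I-(A_\alpha\varphi_i)_*)=0$ means precisely that $(A_\alpha\varphi_i)_*$ has eigenvalue $1$, so Lemma~\ref{lem:ev1}, applied to $\varphi=A_\alpha\varphi_i$ and the element $g'$, yields a $g_0\in G$ with $g'A_\alpha\varphi_i(x)g_0\neq x$ for all $x\in G$.

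Next I would build the homotopy by right translation. Since $G$ is connected, choose a path $c:[0,1]\to G$ with $c(0)=1$ and $c(1)=A_\alpha^{-1}(g_0)$, and set $\tilde{f}^{\,t}_j(x)=\tilde{f}_j(x)c(t)$ for $j=1,\ldots,n$. I would then check that $\tilde{f}^{\,t}=(\tilde{f}^{\,t}_1,\ldots,\tilde{f}^{\,t}_n)$ is again the lift of an $n$-valued map: right translation by $c(t)$ is injective and commutes with the $\pi$-action up to this injectivity, so $p(\tilde{f}^{\,t}_j(x))=p(\tilde{f}^{\,t}_k(x))$ if and only if $p(\tilde{f}_j(x))=p(\tilde{f}_k(x))$, whence $\tilde{f}^{\,t}$ still maps into $F_n(G,\pi)$; moreover $\tilde{f}^{\,t}_j(\gamma x)=\tilde{f}_j(\gamma x)c(t)=\phi_j(\gamma)\tilde{f}_{\sigma_\gamma^{-1}(j)}(x)c(t)=\phi_j(\gamma)\tilde{f}^{\,t}_{\sigma_\gamma^{-1}(j)}(x)$, so the induced morphism $(\phi_1,\ldots,\phi_n;\sigma)$ is unchanged. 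Thus $\{f^t\}$ is an $n$-valued homotopy from $f=f^0$ to $f^1$ that leaves all Reidemeister data fixed.

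Then I would track the effect on the relevant lift-factor. Using $\alpha=(a_\alpha,A_\alpha)$ with $A_\alpha\in\Aut(G)$, one computes $g\alpha\tilde{f}^{\,t}_i(x)=g'A_\alpha\varphi_i(x)A_\alpha(c(t))$; at $t=1$ this equals $g'A_\alpha\varphi_i(x)g_0$, which by the choice of $g_0$ has no fixed point. Hence the fixed point class $p(\Fix(g\alpha\tilde{f}^{\,1}_i))$ of $f^1$ is empty, so inessential. Because $\{f^t\}$ leaves $(\phi_1,\ldots,\phi_n;\sigma)$ unchanged, the partition of the fixed point sets is indexed by the same sets $\R[\phi_i]$ throughout, so the class indexed by $[g\alpha]$ for $f$ is $H$-related to the empty class for $f^1$; as essentiality is invariant under $n$-valued homotopy, I conclude $\eps_{g\alpha,i}=0$. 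Since $g\in N$ was arbitrary, this proves the statement.

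The main obstacle is the bookkeeping required to justify that $\{f^t\}$ is a genuine homotopy of $n$-valued maps, not merely of the separate lift-factors, and that the fixed point class attached to $[g\alpha]$ really corresponds under this homotopy to the empty class of $f^1$. Both points rest on the single observation that right translation preserves the orbit-configuration condition defining $F_n(G,\pi)$ as well as the induced morphism $(\phi_1,\ldots,\phi_n;\sigma)$, so no Reidemeister data is disturbed; everything else is a direct computation once Lemma~\ref{lem:ev1} supplies the fixed-point-free translate.
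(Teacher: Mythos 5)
Your strategy coincides with the paper's proof of this corollary: the same element $g'=ga_\alpha A_\alpha(g_i)$, the same application of Lemma~\ref{lem:ev1} to $\varphi=A_\alpha\varphi_i$ and $g'$, a right-translation homotopy that makes the relevant lift-factor fixed-point free, and homotopy invariance of the index/essentiality to conclude $\eps_{g\alpha,i}=0$. In fact, for the particular path $c(t)=A_\alpha^{-1}(g_0^t)$ (with $g_0^t=\exp(t\log g_0)$ as defined in Section~\ref{sec:affine}) your family projects to exactly the homotopy used in the paper, since $g\alpha\cdot\bigl(\tilde{f}_j(x)c(t)\bigr)=g\alpha\tilde{f}_j(x)\,g_0^t$ and $g\alpha\in\pi$.

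However, the verification on which you rest the whole argument is false, and it fails for precisely the reason that distinguishes infra-nilmanifolds from nilmanifolds. A covering transformation $\gamma=(a_\gamma,A_\gamma)\in\pi\subseteq G\rtimes\Aut(G)$ acts by $y\mapsto a_\gamma A_\gamma(y)$, so $\gamma\cdot(yc)=a_\gamma A_\gamma(y)A_\gamma(c)=(\gamma\cdot y)\,A_\gamma(c)$: right translation does \emph{not} commute with the $\pi$-action unless $A_\gamma$ fixes $c$. Consequently (i) your claimed equivalence $p(\tilde{f}_j(x)c)=p(\tilde{f}_k(x)c)\iff p(\tilde{f}_j(x))=p(\tilde{f}_k(x))$ is wrong in general: on the paper's Klein bottle, $(0,0)$ and $(\tfrac{1}{4},\tfrac{1}{2})$ lie in different $\pi$-orbits, yet after right translation of both by $(-\tfrac{1}{8},0)$ they are identified by the generator $b$; and (ii) your equivariance chain $\tilde{f}^{\,t}_j(\gamma x)=\phi_j(\gamma)\tilde{f}_{\sigma_\gamma^{-1}(j)}(x)c(t)=\phi_j(\gamma)\tilde{f}^{\,t}_{\sigma_\gamma^{-1}(j)}(x)$ tacitly moves $c(t)$ past a deck transformation; correctly, $\phi_j(\gamma)\cdot\bigl(\tilde{f}_{\sigma_\gamma^{-1}(j)}(x)c(t)\bigr)=\bigl(\phi_j(\gamma)\tilde{f}_{\sigma_\gamma^{-1}(j)}(x)\bigr)\,A_{\phi_j(\gamma)}(c(t))$, which agrees with $\tilde{f}_j(\gamma x)c(t)$ only when $A_{\phi_j(\gamma)}$ fixes $c(t)$. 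So neither the claim that $f^t$ remains an $n$-valued map nor the claim that the Reidemeister data $(\phi_1,\ldots,\phi_n;\sigma)$ (and hence the labelling of the class by $[g\alpha]$) is undisturbed actually follows from your computations. Both statements are immediate when $\pi\subseteq G$, because deck transformations are then left translations, which commute with right translations — this is why the argument works verbatim in the nilmanifold setting of \cite{charlotte2} — but on an infra-nilmanifold the nontrivial holonomy makes exactly these points the ones requiring proof; the paper's proof asserts the descended homotopy directly rather than deriving it from the commutation rule you invoke. To repair your write-up you must verify by hand that the specific translated family descends to $\pi\orb G$ and stays $n$-valued, taking the holonomy action $A_{\phi_j(\gamma)}$ on the translating path into account, rather than appealing to a commutation that fails.
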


\begin{proof}
Pick $g\in N$ and define $g'$ as in Corollary \ref{cor:eps1}. By Lemma \ref{lem:ev1}, there exits a $g_0\in G$ such that $g'A_\alpha\varphi_i(x)g_0\neq x$ for all $x\in G$.
Then we have a homotopy \[
\pi\orb G\times [0,1]\to D_n(\pi\orb G):(p(x),t)\mapsto \{p(g\alpha\tilde{f}_1(x)g_0^t),\ldots,p(g\alpha\tilde{f}_n(x)g_0^t) \}.
\]
Under this homotopy, the fixed point class $p(\Fix(g\alpha \tilde{f}_i))$ of $f$ corresponds to the fixed point class $p(\Fix(g\alpha \tilde{f}_ig_0))$. Since $g\alpha \tilde{f}_ig_0$ has no fixed points by definition of $g_0$, the index of this fixed point class is $0$. As indices are preserved under homotopies, the index of $p(\Fix(g\alpha \tilde{f}_i))$ is also $0$.
\end{proof}

On the other hand, the results in \cite{charlotte2} give an expression for the number of terms in the second sum in (\ref{eq:decomp-N}):

\begin{lem}\label{lem:R}
For a morphism $\phi':S'\to N$, with $N$ finitely generated, torsion free and nilpotent, and $S'$ a finite index subgroup of $N$, one has \[
R(\phi')=[N:S']|\det(I-(\phi')_*)|_\infty,
\]
where $|a|_\infty$ is defined as $|a|$ if $a\neq 0$, and $\infty$ if $a=0$.
\end{lem}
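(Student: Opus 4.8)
The plan is to prove the formula by induction on the nilpotency class $c$ of $G$, exploiting that $R(\phi')$ is the number of orbits of the twisted action of $S'$ on $N$ given by $\gamma\cdot n=\gamma\, n\,\phi'(\gamma)^{-1}$. The two ingredients will be an abelian base case (pure linear algebra, where the factor $[N:S']$ appears) and a multiplicativity step over a central extension coming from the isolator series, which reduces class $c$ to class $c-1$ times class $1$.

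For the base case $c=1$ I would choose coordinates identifying $G=\RR^k$ and $N=\ZZ^k$, so that $S'=B\ZZ^k$ for an integer matrix $B$ with $|\det B|=[N:S']$, and the extension $\Phi=(\phi')_*$ satisfies $\Phi B=C$ for the integer matrix $C$ whose columns are the images $\phi'(Be_j)$. Writing the action additively, the orbit of $n$ is $n+(I-\Phi)(S')=n+(B-C)\ZZ^k$, so $R(\phi')=[\ZZ^k:(B-C)\ZZ^k]=|\det(B-C)|_\infty$. Since $B-C=(I-\Phi)B$, this equals $|\det B|\,|\det(I-\Phi)|_\infty=[N:S']\,|\det(I-(\phi')_*)|_\infty$; in particular a singular $I-\Phi$ forces $(B-C)\ZZ^k$ into a proper subspace, giving infinite index and matching the $\infty$ convention.

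For the inductive step I would take $\Lambda=N_c$, the last (central, free abelian) term of the isolator series, set $\bar N=N/\Lambda$ (torsion-free nilpotent of class $c-1$), $\Lambda'=S'\cap\Lambda$ and $\overline{S'}=S'/\Lambda'$, and record that $\phi'$ restricts to $\phi'|_{\Lambda'}\colon\Lambda'\to\Lambda$ and induces $\overline{\phi'}\colon\overline{S'}\to\bar N$ because $\Phi$ preserves the characteristic subgroup $\gamma_c(G)$. The indices multiply, $[N:S']=[\bar N:\overline{S'}]\,[\Lambda:\Lambda']$, and the determinant factors, $\det(I-(\phi')_*)=\det(I-(\overline{\phi'})_*)\det(I-(\phi'|_{\Lambda'})_*)$, by block-triangularity of $(\phi')_*$ with respect to $\mathfrak{z}=\gamma_c(\g)$. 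The projection $N\to\bar N$ induces a surjection $\R[\phi']\twoheadrightarrow\R[\overline{\phi'}]$. If $\det(I-(\overline{\phi'})_*)=0$ the inductive hypothesis makes $R(\overline{\phi'})=\infty$, whence $R(\phi')=\infty$ by surjectivity, and both sides agree. If $\det(I-(\overline{\phi'})_*)\neq0$ I would show this surjection has fibres all of the same size $R(\phi'|_{\Lambda'})$, so that $R(\phi')=R(\overline{\phi'})\cdot R(\phi'|_{\Lambda'})$; combining the inductive hypothesis on $\bar N$ with the base case on $\Lambda$ and the multiplicativity of indices and determinants then yields the claim.

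The heart of the argument, and the main obstacle, is the constancy of the fibre size. Fixing a representative $n_0$, every element of $N$ lying over the class $[\bar n_0]$ is $N$-equivalent to some $n_0\lambda$ with $\lambda\in\Lambda$, and a twisted conjugation relating $n_0\lambda_1$ to $n_0\lambda_2$ must project to an element of the stabiliser $\coin(\tau_{\bar n_0}\overline{\phi'},\bar\iota)$. This stabiliser is trivial: conjugation $\tau_{\bar n_0}$ is unipotent on the associated graded of $\g/\mathfrak{z}$, so $\det(I-(\tau_{\bar n_0}\overline{\phi'})_*)=\det(I-(\overline{\phi'})_*)\neq0$, and the $\log$-linearisation argument of Lemma~\ref{lem:fix} forces the coincidence group to be $1$. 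Hence the relating element lies in $\Lambda'$ and, since $\Lambda$ is central, acts on the fibre coordinate by the purely abelian rule $\lambda_2\mapsto\lambda_2+(I-(\phi'|_{\Lambda'})_*)(\gamma)$, independently of $n_0$. Thus each fibre is in bijection with $\R[\phi'|_{\Lambda'}]$, giving the constant fibre size $R(\phi'|_{\Lambda'})$ and completing the induction.
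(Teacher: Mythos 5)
Your proposal is correct, but it takes a genuinely different route from the paper. The paper disposes of this lemma in a few lines by citing two results of \cite{charlotte2}: Theorem~4.14, which gives $R(\phi')=[N:S']\,|\det(I-M)|_\infty$ with $M$ the block-diagonal matrix of the maps induced on the factors of the isolator series, and Theorem~5.4, which says that $(\phi')_*$ is block lower triangular with the same diagonal blocks, so that $\det(I-M)=\det(I-(\phi')_*)$. You instead re-derive the substance of that cited Theorem~4.14 from scratch, by induction on the nilpotency class along the central extension $1\to N_c\to N\to N/N_c\to 1$; your choice of the isolator $N_c=\sqrt[N]{\gamma_c(N)}=N\cap\gamma_c(G)$ rather than $\gamma_c(N)$ is exactly what guarantees that the quotient is torsion free, that $\Lambda$ is a lattice in $\gamma_c(G)$, and that $\phi'$ both restricts and descends. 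Your base case, $R(\phi')=[\ZZ^k:(I-\Phi)B\ZZ^k]=|\det B|\,|\det(I-\Phi)|_\infty$, is sound, and your fibre-constancy argument is in effect the quoted Lemma~3.1.4 of \cite{penninckx} in the special case of a trivial stabilizer, with triviality obtained by the same two devices the paper deploys elsewhere: the unipotence of inner twists on the associated graded (the argument of Lemma~\ref{lem:det}, which legitimately gives $\det(I-(\tau_{\bar n_0}\overline{\phi'})_*)=\det(I-(\overline{\phi'})_*)$ because both factors are simultaneously block triangular) and the $\log$-linearisation of the first paragraph of Lemma~\ref{lem:fix}; centrality of $\Lambda$ then kills the $\tau_{n_0}$-twist on the fibre, as you note, so each fibre is $\Lambda/(1-\phi'|_{\Lambda'})(\Lambda')$ independently of the base class. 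What the two approaches buy: the paper's proof is short but delegates all substance to \cite{charlotte2}, while yours is self-contained and makes transparent where the factor $[N:S']$ and the $|\cdot|_\infty$ convention arise (from the lattice index $[\ZZ^k:A\ZZ^k]=|\det A|_\infty$ in the abelian layers). Two cosmetic points only: $N/N_c$ has class \emph{at most} $c-1$, so the induction hypothesis should be phrased for class $\le c-1$; and your $\mathfrak{z}=\gamma_c(\g)$ need not be the full centre of $\g$, though you only use that it is a central ideal preserved by every endomorphism, which is true.
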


\begin{proof}
Let $c$ be the nilpotency class of $N$. For $i=1,\ldots,c$, write $N_i=\sqrt[N]{\gamma_i(N)}$ and $S'_i=\sqrt[S']{\gamma_i(S')}$. If $M_i$ denotes the matrix of the morphism $S'_iN_{i+1}/N_{i+1}\to N_i/N_{i+1}$ induced by $\phi'$, it is proven in \cite[Theorem 4.14]{charlotte2} that \[
R(\phi')=[N:S']|\det(I-M)|_\infty,
\]
where \[
M = \begin{bmatrix}
M_1 & 0 & \cdots & 0 \\
0 & M_2 & \cdots & 0 \\
\vdots & \vdots & \ddots & \vdots \\
0 & 0 & \cdots & M_c
\end{bmatrix}.
\]

On the other hand, it is proven in \cite[Theorem 5.4]{charlotte2} that the matrix of $(\phi')_*$ with respect to a well-chosen basis for $\g$ is of the form \[
\begin{bmatrix}
M_1 & 0 & \cdots & 0 \\
* & M_2 & \cdots & 0 \\
\vdots & \vdots & \ddots & \vdots \\
* & * & \cdots & M_c
\end{bmatrix}.
\]
Thus, we can replace $\det(I-M)$ by $\det(I-(\phi')_*)$, which gives the desired formula.
\end{proof}

Lastly, we observe:

\begin{lem}\label{lem:det}
Let $f:\pi\orb G\to D_n(\pi\orb G)$ be an affine $n$-valued map with lift $\tilde{f}:x\mapsto (g_1\varphi_1(x),\ldots,g_n\varphi_n(x))$ and corresponding morphisms $\phi_i':S_i'\to N$. For all $i$ and $\alpha\in \pi$, for all $g\in N$,
\[
\det(I-(\tau_{g\alpha}\phi'_i)_*)=\det(I-(A_{\alpha}\varphi_i)_*).
\]
\end{lem}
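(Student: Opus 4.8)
The plan is to reduce both determinants to differentials of canonical Lie-group extensions and then compare them through the lower-central-series grading. First I would make the endomorphism $(\phi'_i)_*$ explicit. Restricting the defining relation \eqref{eq:psi-aff} to $\gamma\in S'_i$ — where $\sigma_\gamma(i)=i$, and where both $\gamma$ and $\phi_i(\gamma)=\phi'_i(\gamma)$ lie in $N\subseteq G$, so that their actions on $G$ are ordinary left translations — and evaluating at $x=e$ yields $g_i\varphi_i(\gamma)=\phi'_i(\gamma)g_i$, that is $\phi'_i(\gamma)=g_i\varphi_i(\gamma)g_i^{-1}$. Since $x\mapsto g_i\varphi_i(x)g_i^{-1}$ is already a Lie-group endomorphism of $G$ agreeing with $\phi'_i$ on the lattice $S'_i$, uniqueness of the extension forces it to be \emph{the} extension; hence $(\phi'_i)_*=\mathrm{Ad}(g_i)\circ(\varphi_i)_*$, where $\mathrm{Ad}(g_i)$ denotes the differential of conjugation by $g_i$ in $G$.

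Next I would treat the conjugation $\tau_{g\alpha}$. Writing $\alpha=(a_\alpha,A_\alpha)$ and $g\alpha=(ga_\alpha,A_\alpha)$, a direct computation in $G\rtimes\Aut(G)$ shows that $\tau_{g\alpha}$ restricts on $N$ to $n\mapsto(ga_\alpha)A_\alpha(n)(ga_\alpha)^{-1}$, whose Lie-group extension has differential $\mathrm{Ad}(ga_\alpha)\circ(A_\alpha)_*$. Composing, $(\tau_{g\alpha}\phi'_i)_*=\mathrm{Ad}(ga_\alpha)\circ(A_\alpha)_*\circ\mathrm{Ad}(g_i)\circ(\varphi_i)_*$. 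The point is then to collect the two inner factors: using the intertwining identity $(A_\alpha)_*\circ\mathrm{Ad}(g_i)=\mathrm{Ad}(A_\alpha(g_i))\circ(A_\alpha)_*$ (obtained by differentiating $A_\alpha\circ\tau^G_{g_i}=\tau^G_{A_\alpha(g_i)}\circ A_\alpha$) together with the fact that $\mathrm{Ad}$ is a homomorphism, this becomes
\[
(\tau_{g\alpha}\phi'_i)_*=\mathrm{Ad}\bigl(ga_\alpha A_\alpha(g_i)\bigr)\circ(A_\alpha\varphi_i)_*=\mathrm{Ad}(g')\circ(A_\alpha\varphi_i)_*,
\]
with $g'=ga_\alpha A_\alpha(g_i)$ precisely the element appearing in Corollary \ref{cor:eps1}.

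It then remains to check that pre-composing with $\mathrm{Ad}(g')$ does not change $\det(I-\,\cdot\,)$. Here I would invoke the grading: the lower central series terms $\gamma_j(\g)$ are characteristic, so both $\mathrm{Ad}(g')$ and $(A_\alpha\varphi_i)_*$ are block-triangular with respect to a basis adapted to $\g\supseteq\gamma_2(\g)\supseteq\cdots$; moreover $\mathrm{Ad}(g')$ is unipotent with $(\mathrm{Ad}(g')-I)\gamma_j(\g)\subseteq\gamma_{j+1}(\g)$, hence induces the identity on each quotient $\gamma_j(\g)/\gamma_{j+1}(\g)$. Consequently $\mathrm{Ad}(g')\circ(A_\alpha\varphi_i)_*$ and $(A_\alpha\varphi_i)_*$ induce the \emph{same} map on every graded piece, so they have identical diagonal blocks $M_j$. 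As in the proof of Lemma \ref{lem:R} (using the block structure from \cite[Theorem 5.4]{charlotte2}), $\det(I-\,\cdot\,)$ equals the product $\prod_j\det(I-M_j)$ over these blocks, so the two determinants coincide, which is the assertion.

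The main obstacle I anticipate is purely the bookkeeping of inner automorphisms: one must read off the extension of $\phi'_i$ correctly from \eqref{eq:psi-aff} and then shepherd the $\mathrm{Ad}$ factors past $(A_\alpha)_*$ so that they coalesce into the single element $g'$. Once that is done, the invariance of the determinant is no accident but reflects the general principle that $\det(I-\psi_*)$ depends only on the action of $\psi$ on the associated graded of the lower central series, on which inner automorphisms act trivially.
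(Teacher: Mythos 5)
Your proposal is correct and follows essentially the same route as the paper: both extract $\phi'_i=c_{g_i}\varphi_i$ and $\tau_{g\alpha}=c_{ga_\alpha}A_\alpha$ from \eqref{eq:psi-aff}, and both conclude via the observation that inner automorphisms induce the identity on the graded quotients $\gamma_j(\g)/\gamma_{j+1}(\g)$, so that $\det(I-\,\cdot\,)$ depends only on the diagonal blocks in a basis adapted to the lower central series. Your extra step of collecting the two conjugation factors into a single $\mathrm{Ad}(g')$ via the intertwining identity is a harmless refinement (the paper instead compares diagonal blocks of the two factors separately), and note only that $\mathrm{Ad}(g')$ is composed on the left, not ``pre-composed''.
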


\begin{proof}
We may choose a basis $v_{1,1}, v_{1,2}, \ldots, v_{1,k_1},v_{2,1}, \ldots , v_{c,k_c}$ for $\g$ in such a way that the vectors $v_{i,1}, v_{i,2}, \ldots, v_{i,k_i},v_{i+1,1}, \ldots , v_{c,k_c}$ form a basis of $\gamma_i(\g)$, the $i$-th term in the lower central series of the Lie algebra $\g$. With respect to this basis, all morphisms $\g\to\g$ are represented by blocked lower triangular matrices (with blocks of size $k_i \times k_i$ on the diagonal).

Now note that for the morphisms $\tau_{g\alpha},\,\phi_i':S_i'\to N$, and therefore also for their extensions $G\to G$, \[
\tau_{g\alpha}=c_{ga_\alpha} A_\alpha \qquad \text{and} \qquad \phi_i'=c_{g_i}\varphi_i
\]
where $c_g:G\to G$ denotes conjugation with $g\in G$. (The equality for $\phi_i$ follows by evaluating (\ref{eq:psi-aff}) at $x=1$ and $\gamma\in S_i'$.)

For all $g\in G$, the map $c_g: G \to G$ induces the identity on $\gamma_i(G)/\gamma_{i+1}(G)$, so the maps $(c_g)_\ast$ induce the identity on $\gamma_i(\g)/\gamma_{i+1}(\g)$ and the matrix representation of $(c_g)_\ast$ has identity blocks on the diagonal. It follows that the matrices of $(\tau_{g\alpha})_*$ and $(\phi'_i)_*$ have the same blocks on the diagonal as those of $(A_\alpha)_*$ and $(\varphi_i)_*$, respectively; therefore \[
\det(I-(\tau_{g\alpha}\phi'_i)_*)=\det(I-(A_{\alpha}\varphi_i)_*). \qedhere
\]
\end{proof}

Putting all these results together gives the averaging formula:

\begin{thm}\label{mainthm}
The Nielsen number of an affine map $f:\pi\orb G\to D_n(\pi\orb G)$ with lift $\tilde{f}:x\mapsto (g_1\varphi_1(x),\ldots,g_n\varphi_n(x))$ is given by \[
N(f)=\frac{1}{[\pi:N]}\sum_{\bar{\alpha}\in \pi/N}\sum_{i=1}^n |\det(I-(A_\alpha\varphi_i)_*)|.
\]
\end{thm}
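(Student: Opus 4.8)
The plan is to start from the decomposed averaging formula (\ref{eq:decomp-N}) and to show that, for an affine map, the entire inner double sum over $\R[\tau_\alpha\phi'_i]$ collapses to the single quantity $[N:S'_i]\,|\det(I-(A_\alpha\varphi_i)_*)|$; once this is established, the index bookkeeping reduces the prefactor $\frac{[N:S'_i]}{[\pi:S'_i]}$ to $\frac{1}{[\pi:N]}$ and the formula falls out. The unifying observation is Lemma~\ref{lem:det}: for every $g\in N$ one has $\det(I-(\tau_{g\alpha}\phi'_i)_*)=\det(I-(A_\alpha\varphi_i)_*)$, so every quantity appearing in the inner sum is governed by the single determinant $\det(I-(A_\alpha\varphi_i)_*)$. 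Since $N\subseteq G$ contributes trivially to the automorphism part, $A_\alpha$ (and hence this determinant) depends only on the coset $\bar\alpha\in\pi/N$ and on $i$, which is what makes the outer sum over $\bar\alpha\in\pi/N$ in the statement meaningful.

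First I would fix $i$ and $\bar\alpha$ and split into two cases according to whether $\det(I-(A_\alpha\varphi_i)_*)$ vanishes. If it is nonzero, Corollary~\ref{cor:eps1} gives $\eps_{g\alpha,i}=1$ for every $g\in N$; moreover, applying Lemma~\ref{lem:fix} to $\phi=\tau_{g\alpha}\phi_i$ and its restriction $\phi'=\tau_{g\alpha}\phi'_i$ (whose induced determinant is nonzero by Lemma~\ref{lem:det}) yields $\fix(\tau_{g\alpha}\phi_i)=1$, hence $|\overline{\fix(\tau_{g\alpha}\phi_i)}|=1$. Thus each summand equals $1$, and the inner sum over $[g]\in\R[\tau_\alpha\phi'_i]$ simply counts the Reidemeister classes, i.e.\ equals $R(\tau_\alpha\phi'_i)$. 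By Lemma~\ref{lem:R} applied to $\tau_\alpha\phi'_i$, combined again with Lemma~\ref{lem:det}, this count is exactly $[N:S'_i]\,|\det(I-(A_\alpha\varphi_i)_*)|$.

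Next I would treat the degenerate case $\det(I-(A_\alpha\varphi_i)_*)=0$. Here Corollary~\ref{cor:eps0} forces $\eps_{g\alpha,i}=0$ for every $g\in N$, so each summand vanishes and the whole inner double sum is $0$; since $[N:S'_i]\,|\det(I-(A_\alpha\varphi_i)_*)|=0$ as well, the same closed expression holds. It is worth emphasising that in this case $R(\tau_\alpha\phi'_i)$ may be infinite (the $|\cdot|_\infty$ of Lemma~\ref{lem:R} is genuinely $\infty$), so it is essential that the vanishing comes from the index factor $\eps_{g\alpha,i}$ and not from a count of terms. Combining the two cases, the inner double sum equals $[N:S'_i]\,|\det(I-(A_\alpha\varphi_i)_*)|$ for all $i$ and all $\bar\alpha$.

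Finally I would substitute this back into (\ref{eq:decomp-N}), use the tower law $[\pi:S'_i]=[\pi:N]\,[N:S'_i]$ to rewrite $\frac{[N:S'_i]}{[\pi:S'_i]}=\frac{1}{[\pi:N]}$, factor the constant $\frac{1}{[\pi:N]}$ out of both sums, and interchange the two finite summations to reach the stated formula. The main obstacle here is not any single hard estimate but rather keeping $\det(I-(A_\alpha\varphi_i)_*)$ as the common thread running through four separate inputs — the index value via Corollaries~\ref{cor:eps1} and~\ref{cor:eps0}, the triviality of $\fix$ via Lemma~\ref{lem:fix}, and the class count via Lemma~\ref{lem:R} — and, above all, disposing of the $\det=0$ branch cleanly without ever appealing to a finite Reidemeister number.
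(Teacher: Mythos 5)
Your proposal is correct and coincides with the paper's own proof: the same case split on the vanishing of $\det(I-(A_\alpha\varphi_i)_*)$, with Lemma~\ref{lem:fix}, Corollary~\ref{cor:eps1}, and Lemmas~\ref{lem:R} and~\ref{lem:det} collapsing the inner sum of \eqref{eq:decomp-N} to $[N:S'_i]\,|\det(I-(A_\alpha\varphi_i)_*)|$ in the nonsingular case, Corollary~\ref{cor:eps0} killing the degenerate case, and the same index bookkeeping $[\pi:S'_i]=[\pi:N]\,[N:S'_i]$ at the end. Your side remarks — that $A_\alpha$ depends only on $\bar{\alpha}\in\pi/N$, and that in the $\det=0$ branch the vanishing must come from $\eps_{g\alpha,i}$ since $R(\tau_\alpha\phi'_i)$ may be infinite — merely make explicit two points the paper leaves implicit.
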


\begin{proof}
Recall from section \ref{sec:decomp} that \[
N(f)=\sum_{i=1}^n\frac{1}{[\pi:S'_i]}\sum_{\bar{\alpha}\in \pi/N}\sum_{[g]\in \R[\tau_\alpha\phi'_i]}|\overline{\fix(\tau_{g\alpha}\phi_i)}|\,\varepsilon_{g\alpha,i}.
\]
In case $\det(I-(A_\alpha\varphi_i)_*)\neq 0$, Lemma \ref{lem:fix} implies $\fix(\tau_{g\alpha}\phi_i)=1$, so in particular $|\overline{\fix(\tau_{g\alpha}\phi_i)}|=1$; Corollary \ref{cor:eps1} implies $\varepsilon_{g\alpha,i}=1$ for all $g$; and Lemmas \ref{lem:R} and \ref{lem:det} imply \[
R(\tau_\alpha\phi'_i)=[N:S'_i]|\det(I-(A_\alpha\varphi_i)_*)|.
\]
Thus, in this case, \[
\sum_{[g]\in \R[\tau_\alpha\phi'_i]}|\overline{\fix(\tau_{g\alpha}\phi_i)}|\,\varepsilon_{g\alpha,i}=[N:S'_i]|\det(I-(A_\alpha\varphi_i)_*)|.
\]
In case $\det(I-(A_\alpha\varphi_i)_*)=0$, Corollary \ref{cor:eps0} implies $\varepsilon_{g\alpha,i}=0$ for all $g$, so \[
\sum_{[g]\in \R[\tau_\alpha\phi'_i]}|\overline{\fix(\tau_{g\alpha}\phi_i)}|\,\varepsilon_{g\alpha,i}=0=[N:S'_i]|\det(I-(A_\alpha\varphi_i)_*)|.
\]
It follows that \begin{align*}
N(f)&=\sum_{i=1}^n\frac{1}{[\pi:S'_i]}\sum_{\bar{\alpha}\in \pi/N}[N:S'_i]|\det(I-(A_\alpha\varphi_i)_*)| \\
&=\frac{1}{[\pi:N]}\sum_{\bar{\alpha}\in \pi/N}\sum_{i=1}^n\,|\det(I-(A_\alpha\varphi_i)_*)|. \qedhere
\end{align*}
\end{proof}

\section{Example}

To illustrate the formula, consider the following example. Take $G=\RR^2$ and let $\pi\subseteq \RR^2\rtimes \mathrm{GL}_2(\RR)$ be the group generated by \[
a=\left(\begin{bmatrix}1 \\ 0 \end{bmatrix},\begin{bmatrix} 1 & 0 \\ 0 & 1 \end{bmatrix}\right), \quad b=\left(\begin{bmatrix}0 \\ \frac{1}{2} \end{bmatrix},\begin{bmatrix} -1 & 0 \\ 0 & 1 \end{bmatrix}\right).
\]
Then $\pi\orb \RR^2$ is an infra-nilmanifold isomorphic to the Klein bottle. Consider the map given by \[
\tilde{f}: \RR^2\to F_2(\RR^2,\pi): (t_1,t_2)\mapsto ((0,\textstyle\frac{t_2}{2}),(0,\textstyle\frac{t_2}{2}-\frac{1}{4})).
\]
This map induces a well-defined $2$-valued map $f:\pi\orb \RR^2\to D_2(\pi\orb \RR^2)$ on the Klein bottle, since for all $(t_1,t_2)\in \RR^2$ \begin{align*}
\tilde{f}(a(t_1,t_2))
&=((0,\textstyle\frac{t_2}{2}),(0,\textstyle\frac{t_2}{2}-\frac{1}{4}))=(1,1;1)\tilde{f}(t_1,t_2) \\
\tilde{f}(b(t_1,t_2))
&=(b(0,\textstyle\frac{t_2}{2}-\frac{1}{4}),(0,\textstyle\frac{t_2}{2}))=(b,1;\sigma)\tilde{f}(t_1,t_2)
\end{align*}
with $\sigma\in \S_2$ the permutation that interchanges $1$ and $2$.

Note that $f$ is an affine map, with \[
g_1=\begin{bmatrix}0 \\ 0 \end{bmatrix}, \quad g_2=\begin{bmatrix}0 \\ -\frac{1}{4} \end{bmatrix}, \quad \varphi_1=\varphi_2=\begin{bmatrix} 0 & 0 \\ 0 & \frac{1}{2} \end{bmatrix}.
\]
To compute the Nielsen number of $f$, we consider the nilmanifold $Z\orb \RR^2$ where $Z=\pi\cap \RR^2=\langle a,b^2 \rangle \cong \ZZ^2$, i.e. the $2$-torus. The group $\pi/Z$ has two elements, represented by \[
\left(\begin{bmatrix}0 \\ 0 \end{bmatrix},\begin{bmatrix} 1 & 0 \\ 0 & 1 \end{bmatrix}\right) \quad \text{and} \quad \left(\begin{bmatrix}0 \\ 0 \end{bmatrix},\begin{bmatrix} -1 & 0 \\ 0 & 1 \end{bmatrix}\right).
\]
Thus, by Theorem \ref{mainthm} \begin{align*}
N(f) &= \frac{1}{2} \sum_{\bar{\alpha}\in \pi/Z} \sum_{i=1}^2 \left|\,\det\left(I-A_\alpha\begin{bmatrix} 0 & 0 \\ 0 & \frac{1}{2} \end{bmatrix}\right)\right| \\
&= \sum_{\bar{\alpha}\in \pi/Z} \left|\,\det\left(I-A_\alpha\begin{bmatrix} 0 & 0 \\ 0 & \frac{1}{2} \end{bmatrix}\right)\right| \\
&=\left|\,\det\left(I-\begin{bmatrix} 1 & 0 \\ 0 & 1 \end{bmatrix}\begin{bmatrix} 0 & 0 \\ 0 & \frac{1}{2} \end{bmatrix}\right)\right|+\left|\,\det\left(I-\begin{bmatrix} -1 & 0 \\ 0 & 1 \end{bmatrix}\begin{bmatrix} 0 & 0 \\ 0 & \frac{1}{2} \end{bmatrix}\right)\right| \\
&=\left|\,\det\begin{bmatrix} 1 & 0 \\ 0 & \frac{1}{2} \end{bmatrix}\right|+\left|\,\det\begin{bmatrix} 1 & 0 \\ 0 & \frac{1}{2} \end{bmatrix}\right|=1.
\end{align*}

A direct calculation shows that the map $f$ has one isolated fixed point: indeed, for all $k,\ell\in \ZZ$ we have \begin{align*}
(t_1,t_2)=a^kb^\ell(0,\textstyle\frac{t_2}{2}) &\iff (t_1,t_2)=(k,\textstyle\frac{t_2}{2}+\frac{\ell}{2}) \\ &\iff (t_1,t_2)=(k,\ell)
\end{align*}
and
\begin{align*}
(t_1,t_2)=a^kb^\ell(0,\textstyle\frac{t_2}{2}-\frac{1}{4}) &\iff (t_1,t_2)=(k,\textstyle\frac{t_2}{2}-\frac{1}{4}+\frac{\ell}{2}) \\ &\iff (t_1,t_2)=(k,\textstyle-\frac{1}{2}+\ell);
\end{align*}
and all these points $(k,\ell)$ and $(k,-\frac{1}{2}+\ell)$ in $\RR^2$ project to the same point on the Klein bottle. Thus, the minimal number of fixed points is actually attained in this case.

\begin{rmk}
This map $f$ is an example of a map on the Klein bottle that does not lift to the torus $Z\orb \RR^2$. In fact, there is no nilmanifold (torus) $N\orb \RR^2$ that finitely covers $\pi\orb \RR^2$ so that $f$ lifts to $N\orb \RR^2$: in order for such a lift to exist, the induced morphism of covering groups $f_*=(\phi_1,\phi_2;\sigma):\pi\to \pi^2 \rtimes \S_2$ defined in section \ref{sec:intro} must satisfy $f_*(N) \subseteq N^2\rtimes \S_2$. However, note that $f_*(b^2)=(b,b;1)$. If $N\orb \RR^2$ is a nilmanifold that finitely covers $\pi\orb \RR^2$, then $N$ must be a finite index subgroup of $Z$. Hence there is a smallest $\ell\in\ZZ_{>0}$ so that $(b^2)^\ell\in N$. But then $f_*((b^2)^\ell)=(b^\ell,b^\ell;1)$ is not an element of $N^2\rtimes \S_2$, by minimality of $\ell$.

This illustrates the fact that a classical averaging formula in terms of lifts as in \cite{kimleelee} is not feasible in the $n$-valued case.
\end{rmk}

\end{document}